\theoremstyle{plain}
\newtheorem{prop}{Proposition}
\newtheorem{lemma}[prop]{Lemma}
\newtheorem{thm}[prop]{Theorem}
\newtheorem{cor}[prop]{Corollary}
\theoremstyle{remark}
\newtheorem{rmk}[prop]{Remark}
\theoremstyle{definition}
\theoremstyle{definition}
\newtheorem{defn}[prop]{Definition}
\newcommand{\R}{\mathbb{R}}
\newcommand{\defeq}{:=}
\begin{document}
\author{Usman Hafeez, Théo Lavier, Lucas Williams, and Lyudmila Korobenko}
\title{Orlicz-Sobolev Inequalities and the Dirichlet Problem for Infinitely Degenerate Elliptic Operators}
\maketitle

\begin{abstract}
We investigate a connection between solvability of the Dirichlet problem for an infinitely degenerate elliptic operator and the validity of an Orlicz-Sobolev inequality in the associated subunit metric space. For subelliptic operators it is known that the classical Sobolev inequality is sufficient and almost necessary for the Dirichlet problem to be solvable with a quantitative bound on the solution \cite{Sawyer}. When the degeneracy is of infinite type, a weaker Orlicz-Sobolev inequality seems to be the right substitute \cite{Luda}. In this paper we investigate this connection further and reduce the gap between necessary and sufficient conditions for solvability of the Dirichlet problem.
\end{abstract}

\section{Introduction}
Consider the Dirichlet problem with a divergence form (degenerate) elliptic operator
\begin{equation}\label{Problem}
\begin{cases}
\nabla\cdot A \nabla u = f\  \  \text{in}\  \  \Omega \\ 
u|_{\partial\Omega}=0
\end{cases},
\end{equation}
where $A$ is nonnegative semidefinite and has bounded measurable coefficients, and $\Omega$ is a bounded domain in $\R^n$ with sufficiently smooth boundary. We are interested in establishing sharp conditions on the matrix $A$ that guarantee existence of bounded weak solutions. More precisely, we are looking for a function $u$ from the degenerate Sobolev space $\left(W_{A}^{1,2}\right)_0(\Omega)$ satisfying
\[
\int\nabla u\cdot A\nabla\varphi=-\int f\varphi
\]
for every test function $\varphi$ (in which case we say that $u$ is a weak solution of (\ref{Problem})), as well as the qualitative estimate
\[
||u||_{L^{\infty}(\Omega)}\leq C||f||_X
\]
for some appropriate normed space $X$. The case when $A$ is elliptic has been completely settled by Nash \cite{Nash}, Moser \cite{Mos}, and DeGiorgi \cite{DeG}, and is now considered a classical theory \cite{GilTrud}. When the eigenvalues of the matrix $A$ are allowed to vanish, i.e. the operator is degenerate elliptic, the theory is far from complete. There are generally two cases considered in the literature: finite vanishing with rough coefficients, and infinite vanishing with smooth coefficients. In the case of finite vanishing, the first generalizations of the Moser-DeGiorgi theory are due to Fabes, Kenig, and Serapioni \cite{FKS82} and Franchi and Lanconelli \cite{FrLan2}. The latter deals with the case when one of the eigenvalues of $A$ is constant, while others may vanish to finite order. Franchi and Lanconelli's big idea was to use the subunit metric space associated to the operator, and adapt the classical Moser iteration to that setting. Using this approach, Sawyer and Wheeden \cite{Sawyer} built on the work of Franchi and Lanconelli, among others, to further investigate regularity questions for subelliptic operators with rough coefficients. In particular, they showed that the $(2\sigma,2)$ weak Sobolev inequality with $\sigma>1$ in the subunit metric space
\begin{equation}\label{Sob-classic_0}
\left(\frac{1}{|B|}\int_{B}|w|^{2\sigma}\right)^{\frac{1}{2\sigma}}\leq Cr\left(\frac{1}{|B|}\int_{B}|\nabla_A w|^{2}\right)^{\frac{1}{2}}+C\left(\frac{1}{|B|}\int_{B}|w|^{2}\right)^{\frac{1}{2}}
\end{equation}
for all $w\in W^{1,2}_{0}(B)$, is sufficient for solvability of the Dirichlet problem (\ref{Problem}) when $\Omega=B$, a subunit metric ball, with the quantitative estimate
\[
||u||_{L^{\infty}(B)}\leq C||f||_{L^q(B)}
\]
where $q>\sigma'$, and $\sigma'$ is the dual of $\sigma$. Moreover, if the above estimate holds for $q=\sigma'$ then Sobolev inequality (\ref{Sob-classic_0}) holds (almost necessity).
In this paper we investigate the same question for the case of the infinitely degenerate operator $L=\nabla\cdot A\nabla$. More precisely, we make use of an analogue of (\ref{Sob-classic_0}), considering the more general Orlicz spaces, $L^\phi$, instead of the traditional Lebesgue spaces. By a $(\phi,2)$ Orlicz-Sobolev inequality we mean the following
\begin{equation}\label{Sob-Orlicz}
||w||_{L^{\phi}(B,d\mu)}\leq C(r)\left(\int_{B}|\nabla_A w|^{2}d\mu\right)^{\frac{1}{2}}
\end{equation}
for all $w\in \left(W^{1,2}_{A}\right)_{0}(B)$ and some Young function $\phi$ (typically satisfying $\phi(t)>t^2$ for all $t>1$), see Section \ref{Preliminaries} for precise definitions. Here and in what follows we use the notation 
\[
d\mu=\frac{dx}{|B|},
\]
and all the integrals are taken with respect to this measure, unless otherwise stated.
There are a few recent results indicating that Orlicz-Sobolev inequalities of the type (\ref{Sob-Orlicz}) are the correct substitute for (\ref{Sob-classic_0}) when the operator is infinitely degenerate.
First, as has been shown in \cite{KoMaRi}, a classical weak Sobolev inequality (\ref{Sob-classic_0}) implies the doubling property of the underlying metric measure space, and hence the degeneracy must be of finite type.
On the other hand, in \cite{Luda,Luda2} an abstract regularity theory for degenerate operators has been developed under the assumption of appropriate Orlicz-Sobolev inequalities (stronger versions of (\ref{Sob-Orlicz})). Moreover, for particular classes of infinitely degenerate operators these inequalities were proved to hold in the degenerate Sobolev spaces associated to the operator.

In this paper we investigate the connection between the Dirichlet problem (\ref{Problem}) and the validity of (\ref{Sob-Orlicz}).
In particular, we prove sufficiency and almost necessity of an Orlicz-Sobolev type inequality for the existence, uniqueness, and boundedness of weak solutions to \textit{degenerate} elliptic partial differential equations with homogeneous Dirichlet boundary conditions.

Our main results are as follows
\begin{thm}\label{thm:sufficiency}
Let $L=\nabla\cdot A\nabla $ with bounded measurable non-negative semidefinite matrix $A$, and $d$ a metric on $\R^n$. Suppose also that (\ref{Sob-Orlicz}) holds for all $w\in \left(W^{1,2}_{A}\right)_{0}(B)$ and the metric ball $B=\Omega\subset \R^n$ with $\phi$ satisfying $\phi(t)\geq t^2$ for all $t\geq 0$, and  $\phi(t)\geq t^2(\ln t)^N,\ N>1$, for all $t\geq 1$. If $f\in L^{\infty}(B),$ then there exists a unique weak solution $u\in \left(W^{1,2}_{A}\right)_{0}(B)$ of (\ref{Problem}) in the ball $\Omega=B$ and it satisfies
\[
||u||_{L^{\infty}(B)}\leq C||f||_{L^{\infty}(B)}.
\]
\end{thm}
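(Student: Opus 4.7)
The proof naturally splits into existence/uniqueness of a weak solution and the quantitative $L^\infty$ bound. For existence, I would first note that $\phi(t) \geq t^2$ gives the continuous embedding $L^\phi(B,d\mu) \hookrightarrow L^2(B,d\mu)$, so that (\ref{Sob-Orlicz}) implies a Poincar\'e-type inequality $\|w\|_{L^2(B,d\mu)} \leq C(r)\|\nabla_A w\|_{L^2(B,d\mu)}$ for all $w\in(W^{1,2}_A)_0(B)$. This makes the bilinear form $a(u,v):=\int_B \nabla u\cdot A\nabla v\,dx$ bounded and coercive on $(W^{1,2}_A)_0(B)$ equipped with its degenerate Dirichlet norm; since $f\in L^\infty(B)\subset L^2(B)$, the linear functional $v\mapsto-\int_B fv\,dx$ is continuous, and the Lax--Milgram theorem delivers a unique weak solution $u$.

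For the $L^\infty$ bound I plan to run a De Giorgi-type level-set iteration. For $k\geq 0$, set $u_k:=(u-k)_+\in(W^{1,2}_A)_0(B)$, $A_k:=\{u>k\}$, and $a_k:=\mu(A_k)$. Testing the equation against $u_k$ yields the Caccioppoli-type energy estimate $\int_B|\nabla_A u_k|^2\,dx \leq \|f\|_{L^\infty(B)}\int_B u_k\,dx$, and inserting this into (\ref{Sob-Orlicz}) gives
\[
\|u_k\|_{L^\phi(B,d\mu)}^2 \leq C\|f\|_{L^\infty(B)}\int_B u_k\,d\mu.
\]
Because $u_k$ is supported in $A_k$, the Orlicz H\"older inequality provides $\int_B u_k\,d\mu \leq 2\|u_k\|_{L^\phi}\,\|\chi_{A_k}\|_{L^{\phi^*}}$ with $\|\chi_{A_k}\|_{L^{\phi^*}}=1/\phi^{*,-1}(1/a_k)$. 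The growth hypothesis $\phi(t)\geq t^2(\ln t)^N$ translates, via the duality $\phi^{-1}(s)\phi^{*,-1}(s)\asymp s$, into $\phi^{*,-1}(s)\gtrsim \sqrt{s}\,(\ln s)^{N/2}$ for large $s$; combining these with the Chebyshev bound $(k'-k)\,a_{k'}\leq\int_B u_k\,d\mu$ yields the key level-set recursion
\[
a_{k'}\leq \frac{C\|f\|_{L^\infty(B)}}{k'-k}\cdot\frac{a_k}{(\ln(1/a_k))^{N}},\qquad k'>k.
\]

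Choosing a geometric sequence of levels $k_j:=M(1-2^{-j})$ with $M$ to be determined and setting $b_j:=a_{k_j}$, the recursion becomes $b_{j+1}\leq (C\|f\|_{L^\infty(B)}\,2^{j+1}/M)\, b_j/(\ln(1/b_j))^{N}$. The hard part will be verifying that $b_j\to 0$: unlike the classical Moser/Stampacchia setting, the Sobolev gain here is only logarithmic and cannot be absorbed by a power-decay ansatz. The hypothesis $N>1$ is the critical Dini-type threshold matching the convergence of $\int_0^{1/e}\frac{dt}{t(\ln(1/t))^{N}}$; a careful induction balancing the logarithmic improvement against the geometric factor $2^{j}$ arising from the shrinking gaps $k_{j+1}-k_j=M2^{-j-1}$ should give $b_j\to 0$ provided $M$ is chosen sufficiently large, linearly in $\|f\|_{L^\infty(B)}$. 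Then $\mu\{u>M\}=0$ forces $u\leq M$ a.e., and applying the same argument to $-u$ yields $\|u\|_{L^\infty(B)}\leq C\|f\|_{L^\infty(B)}$.
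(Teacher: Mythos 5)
Your existence-and-uniqueness argument is exactly the paper's: $\phi(t)\geq t^{2}$ gives the $L^{\phi}\hookrightarrow L^{2}$ embedding, the Orlicz--Sobolev inequality then yields the $(2,2)$ Poincar\'e inequality, and Lax--Milgram applies. That part is fine.

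The $L^{\infty}$ bound, however, contains a genuine gap at precisely the step you flag as ``the hard part,'' and the gap is fatal rather than merely annoying. With a geometric level sequence $k_{j}=M(1-2^{-j})$ the gap $k_{j+1}-k_{j}=M2^{-j-1}$ shrinks geometrically, so the recursion inherits a geometric prefactor $2^{j}$:
\[
b_{j+1}\leq \frac{C\|f\|_{L^{\infty}}}{M}\,2^{j+1}\,\frac{b_{j}}{\bigl(\ln(1/b_{j})\bigr)^{N}} .
\]
Writing $\beta_{j}=\ln(1/b_{j})$ this becomes $\beta_{j+1}\geq\beta_{j}+N\ln\beta_{j}-(j+1)\ln 2+\ln(M/C\|f\|_{L^{\infty}})$. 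However large $\beta_{0}$ is (i.e.\ however small $b_{0}$ is), and however large $M$ is, the increment $N\ln\beta_{j}-(j+1)\ln 2$ turns negative after finitely many steps, because $\ln\beta_{j}$ would have to grow linearly in $j$ to keep pace with $j\ln 2$, which in turn would force $\beta_{j}$ to grow exponentially, which the recursion cannot sustain. The sequence $\beta_{j}$ therefore peaks and then falls back; $b_{j}$ does not tend to $0$, and $\mu\{u>M\}=0$ is not reached for any finite $M$. The logarithmic Sobolev gain simply cannot absorb a geometric gap factor, so the hypothesis $N>1$ is \emph{not} the threshold that saves the geometric-levels scheme.

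The paper avoids this by choosing levels that approach the target only \emph{polynomially}: it sets $C_{k}=\tau\|f\|_{L^{\infty}}\bigl(1-c(k+1)^{-\epsilon/2}\bigr)$, so the gaps $C_{k+1}-C_{k}\approx (k+2)^{-1-\epsilon/2}$ and the prefactors in the recursion are $(k+2)^{1+\epsilon/2}$ rather than $2^{j}$. It also tracks $U_{k}=\int_{B}u_{k}^{2}\,d\mu$ rather than the level-set measure. The resulting recursion $U_{k+1}\leq C(k+2)^{(2+\epsilon)/2}U_{k}\,\Gamma\!\bigl(C(k+2)^{2+\epsilon}U_{k}\bigr)$ with $\Gamma(t)=1/\tilde\Phi^{-1}(1/t)$ has only polynomial coefficients, which a logarithmic gain (with $N>1$) \emph{can} beat; the final convergence is quoted from Theorem~30 of \cite{Luda}. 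To repair your proof, replace the geometric levels by these polynomially-converging ones; the rest of your outline (Caccioppoli, Orlicz H\"older, duality for $\phi^{*}$) is structurally compatible with that change.
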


\begin{thm}\label{thm:necessity}
Let $\varphi$ be a Yong function with $\tilde{\varphi}$ being its dual, and define $\phi$ by $\phi(t)=\varphi(t^2)$ for all $t\in \R$. Suppose that for every $f\in L^{\tilde{\varphi}}(B)$ there exists a unique weak solution $u\in \left(W^{1,2}_{A}\right)_{0}(B)$ of (\ref{Problem}) in the ball $\Omega=B$ which satisfies
\[
||u||_{L^{\infty}(B,d\mu)}\leq C||f||_{L^{\tilde{\varphi}}(B,d\mu)}.
\]
Then Orlicz-Sobolev inequality (\ref{Sob-Orlicz}) holds for all $w\in \left(W^{1,2}_{A}\right)_{0}(B)$.
\end{thm}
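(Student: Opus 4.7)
The plan is to deduce the Orlicz-Sobolev inequality (\ref{Sob-Orlicz}) via an Orlicz duality argument combined with a weighted Caccioppoli estimate for the solution operator $L^{-1}$. Since $\phi(t)=\varphi(t^{2})$, the Luxemburg norms obey the identity $\|w\|_{L^{\phi}(B,d\mu)}^{2}=\|w^{2}\|_{L^{\varphi}(B,d\mu)}$, and duality between $L^{\varphi}$ and $L^{\tilde\varphi}$ reduces (\ref{Sob-Orlicz}) to the bilinear bound
\[
\int_{B} g\,w^{2}\,d\mu \;\leq\; C\,\|g\|_{L^{\tilde\varphi}(B,d\mu)}\,\|\nabla_{A}w\|_{L^{2}(B,d\mu)}^{2}
\]
for all nonnegative $g\in L^{\tilde\varphi}(B)$ and all $w\in(W^{1,2}_{A})_{0}(B)\cap L^{\infty}(B)$; the boundedness on $w$ will be removed at the end by truncation.

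To prove the bilinear bound, fix such $g$ and let $u\in(W^{1,2}_{A})_{0}(B)$ be the unique weak solution of $Lu=g$ supplied by the hypothesis, so that $\|u\|_{L^{\infty}}\leq C\|g\|_{L^{\tilde\varphi}(d\mu)}$. Since $w\in L^{\infty}$, the function $w^{2}$ is admissible in $(W^{1,2}_{A})_{0}(B)$, and testing gives $\int gw^{2}\,dx=-2\int w\,\nabla u\cdot A\nabla w\,dx$, whence by Cauchy-Schwarz
\[
\int gw^{2}\,dx \;\leq\; 2\,\|w\,\nabla_{A}u\|_{L^{2}(dx)}\,\|\nabla_{A}w\|_{L^{2}(dx)}.
\]
The crux is to control $\|w\,\nabla_{A}u\|_{L^{2}(dx)}$ without invoking $\|w\|_{\infty}$. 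For this, test the same equation with $\psi=uw^{2}\in(W^{1,2}_{A})_{0}(B)$, admissible because $u,w\in L^{\infty}$, to obtain the Caccioppoli-type identity
\[
\int w^{2}|\nabla_{A}u|^{2}\,dx \;=\; -\int guw^{2}\,dx - 2\int uw\,\nabla u\cdot A\nabla w\,dx.
\]
Estimating $|\int guw^{2}dx|\leq\|u\|_{\infty}\int gw^{2}dx$ and handling the cross term by Young's inequality (with small parameter to absorb $\tfrac{1}{2}\|w\nabla_{A}u\|_{L^{2}(dx)}^{2}$) yields
\[
\|w\,\nabla_{A}u\|_{L^{2}(dx)}^{2} \;\leq\; 2\|u\|_{\infty}\int gw^{2}\,dx + 4\|u\|_{\infty}^{2}\|\nabla_{A}w\|_{L^{2}(dx)}^{2}.
\]

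Combining the two displayed inequalities, setting $I=\int gw^{2}dx$, $E=\|\nabla_{A}w\|_{L^{2}(dx)}^{2}$, and $M=\|u\|_{\infty}$, gives the quadratic inequality $I^{2}\leq 8MIE+16M^{2}E^{2}$, whose positive root satisfies $I\leq 4(1+\sqrt{2})ME$. Passing to $d\mu$ and substituting $M\leq C\|g\|_{L^{\tilde\varphi}(d\mu)}$ produces the desired bilinear bound; Orlicz duality then gives $\|w^{2}\|_{L^{\varphi}(d\mu)}\leq C'\|\nabla_{A}w\|_{L^{2}(d\mu)}^{2}$, and taking square roots yields (\ref{Sob-Orlicz}) for bounded $w$. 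For general $w\in(W^{1,2}_{A})_{0}(B)$, one applies the estimate to the truncation $T_{k}(w)=\max(\min(w,k),-k)$, uses $|\nabla_{A}T_{k}(w)|\leq|\nabla_{A}w|$, and passes $k\to\infty$ via Fatou's lemma applied to the Luxemburg functional. The main technical obstacle is the Caccioppoli step: producing a bound on $\int w^{2}|\nabla_{A}u|^{2}dx$ that avoids $\|w\|_{\infty}$ by delicately exploiting the symmetry of $A$, the $L^{\infty}$ control on $u$, and the admissibility of $uw^{2}$ as a test function in the weak formulation for $u$.
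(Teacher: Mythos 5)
Your proposal follows essentially the same route as the paper's proof: test the weak formulation with $w^2$ to get $\int g w^2 \leq 2\left(\int w^2|\nabla_A u|^2\right)^{1/2}\left(\int|\nabla_A w|^2\right)^{1/2}$, test with $uw^2$ to derive the Caccioppoli-type bound on $\int w^2|\nabla_A u|^2$ (absorbing via Young's inequality), feed in $\|u\|_\infty\leq C\|g\|_{L^{\tilde\varphi}}$, and finish by Orlicz duality together with the relation $\|w\|_{L^\phi}^2 \approx \|w^2\|_{L^\varphi}$. The only stylistic departure is that you close the combination step by solving the quadratic inequality $I^2\leq 8MIE+16M^2E^2$ directly, while the paper instead compares $\int w^2|\nabla_A u|^2$ to the larger of the two terms on the right and checks both cases lead to the same estimate; the two devices are interchangeable.
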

\begin{rmk}
Note that in the above theorems we do not assume that the metric $d$ is the subunit metric associated to $A$. In practice, to prove Orlicz-Sobolev inequality (\ref{Sob-Orlicz}) one would need to work in a subunit metric space \cite{FrLan2}, or a measure space associated to the operator \cite{FKS82}.
\end{rmk}

A version of the result in Theorem \ref{thm:necessity} and a sketch of the proof appears in Sections 1 and 2 of Chapter 9 in \cite{Luda2}.
It can be seen as a generalization of the subelliptic result (Lemma 102 in \cite{Sawyer}) with $L^{\varphi}$ replacing $L^{\sigma}$ and $L^{\phi}$ replacing $L^{2\sigma}$. In the subelliptic case, the requirement on the right hand side is $f\in L^{q}$ with $q>\sigma'$. In Theorem \ref{thm:necessity} we require $f\in L^{\infty}$, a strengthening of $L^{\tilde{\varphi}}$.
Note that just like in the subelliptic case, there is a gap between necessary and sufficient conditions. We suspect that the sufficient condition in Theorem \ref{thm:sufficiency} can be sharpened, but not with our current method of proof. At this point we do not know if the gap can be closed completely.

The paper is organized as follows. After giving some background and preliminaries in Section \ref{Preliminaries}, we prove the existence and global boundedness of weak solutions, Theorem \ref{thm:sufficiency}, in Section \ref{sec:sufficiency}. Section \ref{NecExtUnq} is devoted to the proof of Theorem \ref{thm:necessity}, the necessity of Orlicz-Sobolev for solvability of the Dirichlet problem with a quantitative bound. The proof follows closely the proof of Lemma 102 in \cite{Sawyer}, and it also appears in Sections 1 and 2 of Chapter 9 in \cite{Luda2}. However, the case of Orlicz-Sobolev spaces is more delicate, so we fill in the gaps and provide all the details. Finally, Section \ref{Counterexamples} provides some counterexamples demonstrating that the requirement on the right hand side in Theorem \ref{thm:sufficiency} cannot be significantly relaxed. More precisely, we give examples of equations admitting unbounded weak solutions in the case of Laplacian, subelliptic, and infinitely degenerate elliptic operators.

\section{Preliminaries}\label{Preliminaries}
\subsection{Subunit metric spaces}
We start this section with some background material on subunit metric spaces associated to degenerate operators, all of which can be found in \cite[Chapter 7]{Luda}. As mentioned in the Introduction, we do not assume the underlying metric space is the subunit metric space, however, it will be used to construct counterexamples in Section \ref{Counterexamples}. 
\subsubsection{Degenerate Sobolev spaces}
Let $A$ be nonnegative semidefinite bounded measurable matrix, and assume that $A(x)=B\left( x\right) ^{\func{tr}%
}B\left( x\right) $ where $B\left( x\right) $ is a Lipschitz continuous $%
n\times n$ real-valued matrix defined for $x\in \Omega $. We define the $A$%
-gradient by%
\begin{equation}
\nabla _{A}=B\left( x\right) \nabla \ ,  \label{def A grad}
\end{equation}%
and the associated degenerate Sobolev space $W_{A}^{1,2}\left( \Omega
\right) $ to have norm%
\begin{equation*}
\left\Vert v\right\Vert _{W_{A}^{1,2}}\equiv \sqrt{\int_{\Omega }\left(
	\left\vert v\right\vert ^{2}+\nabla v^{\func{tr}}A\nabla v\right) }=\sqrt{%
	\int_{\Omega }\left( \left\vert v\right\vert ^{2}+\left\vert \nabla
	_{A}v\right\vert ^{2}\right) }.
\end{equation*}
The space $\left( W_{A}^{1,2}\right) _{0}\left( \Omega \right) $ is defined as the closure in 
	$W_{A}^{1,2}\left( \Omega \right) $ of the subspace of Lipschitz continuous
	functions with compact support in $\Omega $.
	
	\begin{defn}\label{innerProduct}
Given $u,v\in W_A^{1,2}$, define the \emph{inner product} on the gradients of $u$ and $v$ to be
\[
\langle \nabla u,\nabla v\rangle=\nabla u\cdot A\nabla v \defeq \nabla u^{tr} A\nabla v.
\]
Furthermore, define the \emph{$A$ semi-norm} of $\nabla u$ to be 
\[[\nabla u ]_A^2\defeq \langle \nabla u,\nabla u\rangle.\]
\end{defn}
\subsubsection{Subunit metrics}
We now define subunit (or control, or Carnot-Carath\'{e}odory) metric associated to the operator $L=\nabla\cdot A\nabla$, see e.g. \cite{FrLan2}.
\begin{defn}
A \emph{subunit curve} is
Lipschitz curve $\gamma:\,[0,r]\rightarrow\Omega$ such that
$$
(\gamma'(t)\xi)^2\leq\xi'A(\gamma(t))\xi,\;\; a.e.\;t\in[0,r],\;\; \xi\in\R^n
$$
\emph{Subunit metric} is defined by
$$
d(x,y)=\inf\{r>0:\;\gamma(0)=x,\;\gamma(r)=y,\;\gamma\  is\  subunit\  in\  \Omega\}
$$
and the \emph{subunit ball} centered at $x$ with radius $r$ is
$$
B(x,r)=\{y\in\Omega:d(x,y)<r\}
$$
\end{defn}
Franchi and Lanconelli \cite{FrLan2} were the first to realize that the classical Moser iteration scheme can be adapted to certain degenerate operators (with one fixed constant eigenvalue) provided the Euclidean $\R^n$ is replaced by the subunit metric space.

\subsection{Orlicz spaces}
As mentioned in the introduction, we will work with Orlicz spaces, which can be seen as generalizations of Lebesgue spaces: power functions used do define Lebesgue spaces are replaced by more general Young functions. The material below is taken from \cite{Orlicz} 
\begin{defn} \cite{Orlicz}
A function $\theta\colon\R\to[0,\infty]$ is a \emph{Young function} if 
\begin{description}
\item[1] $\theta$ is a convex, lower semicontinuous, $[0,\infty]$- valued function on $\R$.
\item[2] $\theta$ is even and $\theta(0)=0$.
\item[3] $\theta$ and its convex conjugate, $\Tilde{\theta}$, are non-trivial; i.e. it is different from the constant function $\theta(s)=0$ for $s\in \R$.
\end{description}

Note that Properties 1 and 2 imply that any Young function is non-decreasing on $[0,\infty)$ \cite{Orlicz}.
\end{defn}

\begin{defn}\label{dual} \cite{Orlicz}
Given a Young function, $\theta$, the \emph{convex conjugate} of $\theta$, denoted $\Tilde{\theta}$, is defined as
\[
\Tilde{\theta} = \sup_{s\in\R}\{st-\theta(s)\} \in [0,\infty] \text{ for } t\in\R
\]
\end{defn}

We next define the Luxembourg norm, which in turn leads to the definition of an Orlicz space.
\begin{defn}\label{def:LuxNorm}
Let $\theta$ be a Young function, and $\Omega$ be a space with a $\sigma$-field and a $\sigma$-finite positive measure $\mu$. For any measurable function on $\Omega$ we define the \emph{Luxembourg Norm} as:
\begin{equation}\label{LuxNorm}
\|f\|_{L^\theta}\defeq \inf\left\{k>0: \int_\Omega \theta(f/k) d\mu \leq 1 \right\}
\end{equation}
where $\inf(\emptyset)=+\infty$.
\end{defn}
For a Young function $\theta$, the associated Orlicz space is defined to be 
\[
L^\theta=\{f \text{ measurable }:\|f\|_{L^\theta}<\infty\}
\]
The following proposition follows directly from definition (\ref{LuxNorm}).
\begin{prop}\label{prop:scale}
Let $\theta_1$ and $\theta_2$ be two Young functions such that $\theta_1(t)\leq \theta_2(t)$ for all $t\geq 0$. Then $L^{\theta_{2}}\subseteq L^{\theta_{1}}$, in particular, for every $f\in L^{\theta_{2}}$ there holds
\[
||f||_{L^{\theta_{1}}}\leq C ||f||_{L^{\theta_{2}}}.
\]
\end{prop}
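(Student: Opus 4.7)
The plan is to argue directly from the Luxembourg-norm definition \eqref{LuxNorm}, showing that any $k$ admissible for computing $\|f\|_{L^{\theta_2}}$ is automatically admissible for computing $\|f\|_{L^{\theta_1}}$. In fact, I expect to obtain the stronger conclusion $\|f\|_{L^{\theta_1}}\leq \|f\|_{L^{\theta_2}}$ (i.e.\ $C=1$).

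Concretely, fix $f\in L^{\theta_2}$ and let $\epsilon>0$. Choose $k>0$ with $k<\|f\|_{L^{\theta_2}}+\epsilon$ such that
\[
\int_{\Omega}\theta_{2}\!\left(\frac{f}{k}\right)d\mu\leq 1,
\]
which is possible by the definition of the infimum in \eqref{LuxNorm}. Since Young functions are even and non-decreasing on $[0,\infty)$, we have $\theta_i(f(x)/k)=\theta_i(|f(x)|/k)$ for $i=1,2$, and the hypothesis $\theta_1(t)\leq \theta_2(t)$ for $t\geq 0$ therefore gives the pointwise bound $\theta_1(f/k)\leq \theta_2(f/k)$. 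Integrating yields
\[
\int_{\Omega}\theta_{1}\!\left(\frac{f}{k}\right)d\mu\leq \int_{\Omega}\theta_{2}\!\left(\frac{f}{k}\right)d\mu\leq 1,
\]
so $k$ lies in the set whose infimum defines $\|f\|_{L^{\theta_1}}$. Hence $\|f\|_{L^{\theta_1}}\leq k<\|f\|_{L^{\theta_2}}+\epsilon$, and letting $\epsilon\to 0$ gives $\|f\|_{L^{\theta_1}}\leq \|f\|_{L^{\theta_2}}$. The inclusion $L^{\theta_2}\subseteq L^{\theta_1}$ follows immediately.

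There is no real obstacle here; the statement is essentially a tautology once one unpacks the definition. The only cosmetic subtlety is that the infimum in the Luxembourg norm need not a priori be attained, which is why I work with $k$ strictly larger than $\|f\|_{L^{\theta_2}}$ and then pass to the limit, rather than plugging in $k=\|f\|_{L^{\theta_2}}$ directly. This also explains why the constant $C$ in the statement can in fact be taken to be $1$.
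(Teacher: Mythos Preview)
Your proof is correct and matches the paper's approach exactly: the paper does not write out a proof but simply states that the proposition ``follows directly from definition (\ref{LuxNorm}),'' which is precisely what you do by comparing the admissible sets for the two Luxembourg norms. Your observation that one may take $C=1$ is also correct.
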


An equivalent norm on $L^\theta$ given below is based on duality and will be used in some of the proofs contained in this paper. 
\begin{defn}{\label{OrliczNorm}}
The \emph{Orlicz Norm} of a measurable function $f$ is defined as
\begin{equation*}
|f|_{L^\theta}:=\sup \left\{ \int_\Omega fg\dd{\mu} :g\in L^{\Tilde{\theta}}\text{ and } \|g\|_{L^{\Tilde{\theta}}}\leq 1 \right\}=\sup \left\{ \int_\Omega fg\dd{\mu} :g\in L^{\Tilde{\theta}}\text{ and } \int_{\Omega}\Tilde{\theta}(g)d\mu\leq 1 \right\} .
\end{equation*}
\end{defn}
More precisely, there holds
\begin{equation}\label{eq:Or_eqiuv}
\|f\|_{L^\theta}\leq |f|_{L^\theta} \leq 2\|f\|_{L^\theta}.
\end{equation}
\begin{prop}(H\"older Inequality) \cite{Orlicz}
Given $\theta$, a Young function, $f\in L^\theta$, and $g\in L^{\Tilde{\theta}}$
\begin{equation}\label{eq:Holder}
\int |fg| d\mu \leq 2\|f\|_\theta \|g\|_{\Tilde{\theta}}.
\end{equation}
In particular, $fg\in L^1$.
\end{prop}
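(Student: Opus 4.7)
The plan is to derive this as a direct consequence of the pointwise Young inequality $st \leq \theta(s) + \tilde\theta(t)$ for $s,t \geq 0$, which is immediate from the definition of the convex conjugate in Definition \ref{dual}: since $\tilde\theta(t) = \sup_s\{st - \theta(s)\}$, we have $st - \theta(s) \leq \tilde\theta(t)$ for every $s$, hence the stated inequality after applying it to $|s|$.

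First, I would dispose of the trivial cases: if $\|f\|_{L^\theta} = 0$ or $\|g\|_{L^{\tilde\theta}} = 0$ then $f = 0$ or $g = 0$ a.e.\ and both sides vanish; if either norm is infinite there is nothing to prove. In the remaining case, set $\alpha = \|f\|_{L^\theta}$ and $\beta = \|g\|_{L^{\tilde\theta}}$, both in $(0,\infty)$. The key preliminary observation is that the Luxembourg infimum in Definition \ref{def:LuxNorm} is actually attained in the weak sense
\[
\int_\Omega \theta(f/\alpha)\, d\mu \leq 1, \qquad \int_\Omega \tilde\theta(g/\beta)\, d\mu \leq 1.
\]
To see the first, pick any sequence $k_n \downarrow \alpha$ with $\int_\Omega \theta(f/k_n)\, d\mu \leq 1$. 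Since $\theta$ is lower semicontinuous and even (hence non-decreasing on $[0,\infty)$), Fatou's lemma applied to the non-negative integrands $\theta(f/k_n) \to \theta(f/\alpha)$ gives the claim; the bound for $g$ and $\tilde\theta$ is identical.

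Now I apply Young's inequality pointwise to $s = |f(x)|/\alpha$ and $t = |g(x)|/\beta$ and integrate:
\[
\int_\Omega \frac{|fg|}{\alpha\beta}\, d\mu \;\leq\; \int_\Omega \theta(|f|/\alpha)\, d\mu + \int_\Omega \tilde\theta(|g|/\beta)\, d\mu \;\leq\; 1 + 1 \;=\; 2.
\]
Multiplying through by $\alpha\beta$ yields $\int_\Omega |fg|\, d\mu \leq 2 \|f\|_{L^\theta} \|g\|_{L^{\tilde\theta}}$, which is \eqref{eq:Holder}. The $L^1$ membership follows since the right-hand side is finite.

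The only subtle point is the attainment statement $\int_\Omega \theta(f/\alpha)\, d\mu \leq 1$ at the infimum $\alpha$, which is where lower semicontinuity of $\theta$ (Property 1 in the definition of a Young function) is essential; everything else is a mechanical application of Young's inequality. Note also that the factor $2$ in \eqref{eq:Holder} is exactly what one expects from the equivalence \eqref{eq:Or_eqiuv} between the Luxembourg norm and the dual Orlicz norm of Definition \ref{OrliczNorm}, and indeed an alternative proof would simply unwind Definition \ref{OrliczNorm} and invoke \eqref{eq:Or_eqiuv}.
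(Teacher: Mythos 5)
Your proof is correct and is the standard argument via the pointwise Young inequality $st \leq \theta(s) + \tilde\theta(t)$ together with the observation that $\int_\Omega \theta(f/\|f\|_{L^\theta})\,d\mu \leq 1$; the paper itself states this proposition without proof, simply citing \cite{Orlicz}, and your argument is exactly the one found in that reference. The Fatou step is handled cleanly (one could equally invoke monotone convergence, since lower semicontinuity plus monotonicity of $\theta$ makes $\theta(f/k_n) \uparrow \theta(f/\alpha)$ as $k_n \downarrow \alpha$), and the closing remark correctly identifies that the constant $2$ is the same one appearing in the norm equivalence \eqref{eq:Or_eqiuv}.
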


Finally, we define a particular family of Orlicz functions first introduced in \cite{Luda} employed in the adaptation of DeGiorgi iteration in the proof of Theorem \ref{thm:sufficiency}
\begin{defn}\label{bumpfam}
The family of \emph{Orlicz bump functions $\{\Phi_N\}_{N>1}$} is given by 
\[\Phi_N(t)=\begin{cases}
            t(\ln{t})^N, & \text{ if } t\geq E=E_N=e^{2N};\\
            (\ln{E})^N t, & \text{ if } 0\leq t \leq E=E_N=e^{2N}.
            \end{cases}
\]
\end{defn}

\section{Sufficiency}\label{sec:sufficiency}
This section is devoted to the proof of Theorem \ref{thm:sufficiency}. First we show existence and uniqueness of weak solutions and then establish the quantitative boundedness estimate.
\subsection{Existence of a unique weak solution}
The proof is based on the Lax-Milgram theorem applied to the appropriate bilinear form $B[u,v]$ defined on $\left(W_{A}^{1,2}\right)_{0} \times \left(W_{A}^{1,2}\right)_{0}$.
\begin{prop}\label{prop:Lax-Milg}
        Let $\Omega \subset \mathbb{R}^n$ be a bounded subset, and $A$ a nonnegative semidefinite $n\times n$ matrix with bounded measurable coefficients. Suppose that for every  $w \in \left({W_A}^{1,2}\right)_0(\Omega)$ the following $2-2$ Sobolev inequality holds
        \begin{equation}\label{eq:2-2-Sob}
            \int_{\Omega}|w|^{2}dx\leq C(\Omega) \int_{\Omega}|\nabla_A w|^{2}dx.
        \end{equation}
        Then the bilinear form $B : \left({W_A}^{1,2}\right)_0(\Omega) \times \left({W_A}^{1,2}\right)_0(\Omega)
         \rightarrow \mathbb{R}$ defined by:
        \[
        B[u,v]:= \int_\Omega 
        \nabla u\cdot A \nabla v
        \] 
        is bounded and coercive,i.e,
        \begin{enumerate}
            \item[(1)] There exists $\alpha > 0$ such that 
            $|B[u,v]| \leq \alpha \norm{u}_{W^{1,2}_A}\norm{v}_{W^{1,2}_A}$ for all $u,v \in 
            \left({W_A}^{1,2}\right)_0(\Omega)$.
            \item[(2)] There exists $\beta > 0$ such that 
            $\beta ||u||_{W^{1,2}_A}^{2} \leq B[u,u]$ for all 
            $u \in \left({W_A}^{1,2}\right)_0(\Omega)$.
        \end{enumerate}
    \end{prop}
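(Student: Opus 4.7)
The plan is to verify the two bullets essentially by Cauchy--Schwarz (for boundedness) and by absorbing the $L^2$-mass of $u$ into $\int|\nabla_A u|^2$ via the hypothesis (\ref{eq:2-2-Sob}) (for coercivity). Both steps are elementary; I do not anticipate any real obstacle, and the main thing to be careful about is invoking Cauchy--Schwarz correctly for the degenerate semi-inner product $\langle \nabla u,\nabla v\rangle = \nabla u^{tr}A\nabla v$ introduced in Definition \ref{innerProduct}.

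For boundedness, I would first observe that since $A(x)$ is nonnegative semidefinite, the pairing $\langle \nabla u(x),\nabla v(x)\rangle$ is a symmetric positive semidefinite bilinear form on $\R^n$ at each $x\in\Omega$, so the pointwise Cauchy--Schwarz inequality gives
\[
|\nabla u(x)\cdot A\nabla v(x)|\ \leq\ [\nabla u(x)]_A\,[\nabla v(x)]_A\ =\ |\nabla_A u(x)|\,|\nabla_A v(x)|.
\]
Integrating over $\Omega$ and applying the ordinary $L^2$ Cauchy--Schwarz yields
\[
|B[u,v]|\ \leq\ \int_\Omega |\nabla_A u||\nabla_A v|\ \leq\ \Big(\int_\Omega|\nabla_A u|^2\Big)^{1/2}\Big(\int_\Omega|\nabla_A v|^2\Big)^{1/2}\ \leq\ \|u\|_{W^{1,2}_A}\|v\|_{W^{1,2}_A},
\]
so (1) holds with $\alpha=1$.

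For coercivity, I would simply unpack the $W^{1,2}_A$ norm and feed in the hypothesis (\ref{eq:2-2-Sob}). Since $B[u,u]=\int_\Omega |\nabla_A u|^2$ and
\[
\|u\|_{W^{1,2}_A}^2\ =\ \int_\Omega |u|^2\ +\ \int_\Omega|\nabla_A u|^2,
\]
the Sobolev-type bound (\ref{eq:2-2-Sob}) gives $\int_\Omega |u|^2\leq C(\Omega)\int_\Omega|\nabla_A u|^2=C(\Omega)B[u,u]$, hence
\[
\|u\|_{W^{1,2}_A}^2\ \leq\ \big(C(\Omega)+1\big)\,B[u,u],
\]
which is (2) with $\beta=1/(C(\Omega)+1)$. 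This completes the proof, and the Lax--Milgram theorem can then be applied in the subsequent existence argument.
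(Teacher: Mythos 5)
Your proof is correct and follows essentially the same route as the paper's: Cauchy--Schwarz for the degenerate semi-inner product gives boundedness (with $\alpha=1$), and the $2$--$2$ Sobolev inequality absorbs $\int|u|^2$ into $B[u,u]$ for coercivity. The paper packages the coercivity step by writing $B[u,u]=\tfrac12 B[u,u]+\tfrac12 B[u,u]$ and applying the Sobolev bound to one half, yielding $\beta=\min\{1/(2C),1/2\}$, whereas you add the two bounds directly and get $\beta=1/(C+1)$; these are cosmetically different but interchangeable.
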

        \begin{proof}
        We begin by showing $B$ is bounded.
        \begin{align*}
        \abs{B[u,v]} 
        & =\abs{\int{\nabla u \cdot A \nabla v}}\\
            & \leq \left(\int\abs{\nabla u\cdot A\nabla u}  \right)^{\frac{1}{2}} \left( \int \abs{\nabla v\cdot A\nabla v} \right)^\frac{1}{2} \\
            & \leq \left( \int u^2+\int \abs{\nabla u \cdot A\nabla u }\right)^{\frac{1}{2}} \left( \int v^2 + \int\abs{\nabla v\cdot A\nabla v} \right)^{\tfrac{1}{2}}\\
            & =  \|u\|_{W^{1,2}_A}\|v\|_{W^{1,2}_A}.
        \end{align*}
        for all $u,v \in \qty(W^{1,2}_A)_0$, where the second line
        is due to Holder's inequality. We now proceed to prove 
        (2), the coercivity of the bilinear form $B$ where
        we use Sobolev inequality (\ref{eq:2-2-Sob}) 
        \begin{equation*}
            \int_{\Omega}{u^2} \leq 
            C \int_{\Omega}{| \nabla_{A}u|^{2}} = 
            C B[u,u].
        \end{equation*}
        Namely, we have
        \begin{align*}
            B[u,u]
            & = \frac{1}{2} B[u,u] + \frac{1}{2} B[u,u]\\
            &= \frac{1}{2} \int_{\Omega}{\nabla u \cdot A \nabla u}
            + \frac{1}{2}B[u,u]\\
            &= \frac{1}{2} \int_{\Omega}
            { | \nabla_{A}u |^2 } + 
            \frac{1}{2}B[u,u]\\
            &\geq \frac{1}{2C} \int_{\Omega}{ u^2 } + \frac{1}{2}B[u,u]\\ 
            & = \frac{1}{2C} \int_{\Omega}{ u^2 } + \frac{1}{2}\int_{\Omega} \nabla u\cdot A\nabla u\\
            & \geq  \min\qty{\frac{1}{2C},\frac{1}{2}}\left( \int_{\Omega} u^2+\int_{\Omega} \nabla u \cdot A\nabla u \right)\\
            & = \beta \|u\|^2_{W^{1,2}_A}
        \end{align*}
        where $\beta = \min\{ \frac{1}{2C}, \frac{1}{2} \}$
        and $C$ is as in the aforementioned Sobolev 
        Inequality.
        
        Therefore, the bilinear form, $B$, is
        bounded and coercive.  
        \end{proof}
  We are now ready to show the existence and uniqueness of a weak solution claimed as in Theorem \ref{thm:sufficiency}.  
   \begin{thm}\label{thm:existence}
Let $L=\nabla\cdot A\nabla $ with bounded measurable non-negative semidefinite matrix $A$, and $d$ a metric on $\R^n$. Suppose also that the Sobolev inequality (\ref{eq:2-2-Sob}) holds for all $w\in \left(W^{1,2}_{A}\right)_{0}(B)$ and the metric ball $B=\Omega\subset \R^n$. If $f\in L^{\infty}(B),$ then there exists a unique weak solution $u\in \left(W^{1,2}_{A}\right)_{0}(B)$ to the following Dirichlet problem
\begin{equation}\label{problem'}
\begin{cases}
\nabla\cdot A \nabla u = f\  \  \text{in}\  \  B \\ 
u|_{\partial B}=0
\end{cases}.
\end{equation}
\end{thm}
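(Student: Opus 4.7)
The plan is a direct application of the Lax--Milgram theorem, since the heavy lifting (boundedness and coercivity of the bilinear form $B[u,v]=\int_B \nabla u\cdot A\nabla v$) has already been carried out in Proposition~\ref{prop:Lax-Milg} under hypothesis~(\ref{eq:2-2-Sob}). What remains is to recast the Dirichlet problem~(\ref{problem'}) as a variational equation on the Hilbert space $(W^{1,2}_A)_0(B)$ equipped with the $W^{1,2}_A$ inner product, and to check that the linear functional encoding $f$ on the right-hand side is bounded on that space.

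First I would fix the variational formulation. Multiplying $\nabla\cdot A\nabla u=f$ by a test function $v\in (W^{1,2}_A)_0(B)$ and integrating by parts (which is legitimate in the closure by density of compactly supported Lipschitz functions), I arrive at the identity
\[
B[u,v] = -\int_B f v
\]
for all $v\in (W^{1,2}_A)_0(B)$. This is taken as the definition of a weak solution. Accordingly I introduce the functional $F\colon (W^{1,2}_A)_0(B)\to\R$ by $F(v):=-\int_B f v$; it is clearly linear.

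Next I would verify that $F$ is bounded. Since $B\subset\R^n$ is bounded and $f\in L^\infty(B)$, Hölder's inequality yields
\[
|F(v)| \;\le\; \|f\|_{L^\infty(B)}\int_B |v| \;\le\; \|f\|_{L^\infty(B)}\,|B|^{1/2}\,\|v\|_{L^2(B)}.
\]
Using the $2$-$2$ Sobolev inequality~(\ref{eq:2-2-Sob}), I can bound $\|v\|_{L^2(B)}^{2}\le C\int_B |\nabla_A v|^2 \le C\,\|v\|_{W^{1,2}_A}^2$, so
\[
|F(v)| \;\le\; C(B,\|f\|_\infty)\,\|v\|_{W^{1,2}_A},
\]
which establishes $F\in \bigl((W^{1,2}_A)_0(B)\bigr)^{\!*}$.

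Finally, I would invoke the Lax--Milgram theorem. By Proposition~\ref{prop:Lax-Milg}, $B[\cdot,\cdot]$ is bounded and coercive on the Hilbert space $(W^{1,2}_A)_0(B)$, and $F$ is a bounded linear functional on that space; hence there exists a unique $u\in (W^{1,2}_A)_0(B)$ such that $B[u,v]=F(v)$ for every $v\in (W^{1,2}_A)_0(B)$, which is exactly the statement that $u$ is the unique weak solution of~(\ref{problem'}). There is essentially no obstacle here since the real work was completed in the preceding proposition; the only step that requires care is making sure the right-hand side functional $F$ is bounded in the $W^{1,2}_A$-norm, and this is where the assumption $f\in L^\infty$ combined with the $2$-$2$ Sobolev inequality is used.
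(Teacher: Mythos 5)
Your proof is correct and follows essentially the same route as the paper: bound the right-hand-side functional $F(v)=-\int_B fv$ on $(W^{1,2}_A)_0(B)$, then invoke Proposition~\ref{prop:Lax-Milg} (Lax--Milgram) to obtain a unique $u$ with $B[u,v]=F(v)$ for all $v$. The only cosmetic difference is that you route the $L^2$-bound on $v$ through the Sobolev inequality~(\ref{eq:2-2-Sob}); this is harmless but unnecessary, since $\|v\|_{L^2}\leq\|v\|_{W^{1,2}_A}$ holds already by the definition of the $W^{1,2}_A$-norm, which is all the paper uses.
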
 
\begin{proof}
Consider the linear functional $(f,\cdot) : \qty(W^{1,2}_A)_0(B) \rightarrow \mathbb{R}$ defined by
\[
(f,w)=-\int_{B}fw,\quad \forall w\in \qty(W^{1,2}_A)_0(B).
\]
Since $f\in L^{\infty}(B)$ and $w\in \qty(W^{1,2}_A)_0(B)\subset L^{1}(B)$ we have
\[
|(f,w)|\leq C||f||_{L^{\infty(B)}}||w||_{W^{1,2}_A(B)},
\]
which shows that this linear functional is bounded on $\qty(W^{1,2}_A)_0(B)$.
Therefore, by Proposition \ref{prop:Lax-Milg} there exists a unique 
        element, $u \in \qty(W^{1,2}_A)_0(B)$, such that $B[u,w]=( f,w )$ for all $w\in \qty(W^{1,2}_A)_0(B)$.
By definition of $B[u,w]$ this means
\[
\int\nabla u\cdot A\nabla w=-\int fw
\]
for all $w\in \qty(W^{1,2}_A)_0(B)$,
and we conclude that
        $u$ is a unique weak solution to (\ref{problem'}).
\end{proof}
    
 \begin{cor}
 Under the assumptions of Theorem \ref{thm:sufficiency} there exists a unique weak solution to (\ref{problem'}).
 \end{cor}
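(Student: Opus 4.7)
The plan is to reduce the corollary to Theorem~\ref{thm:existence} by showing that the assumed Orlicz-Sobolev inequality (\ref{Sob-Orlicz}) implies the $2$-$2$ Sobolev inequality (\ref{eq:2-2-Sob}) that Theorem~\ref{thm:existence} requires. Once that implication is in hand, the existence and uniqueness of a weak solution to (\ref{problem'}) follows immediately from Theorem~\ref{thm:existence}.

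To carry this out, I would first observe that the Young function $\phi$ in the hypothesis of Theorem~\ref{thm:sufficiency} satisfies $\phi(t)\geq t^2$ for every $t\geq 0$. Taking $\theta_1(t)=t^2$ and $\theta_2(t)=\phi(t)$, Proposition~\ref{prop:scale} (with the constant equal to $1$, as is evident from the Luxembourg norm definition) yields
\[
\|w\|_{L^{2}(B,d\mu)} \;\leq\; \|w\|_{L^{\phi}(B,d\mu)}
\]
for every $w\in\qty(W^{1,2}_{A})_{0}(B)$. Chaining this with the assumed Orlicz-Sobolev inequality (\ref{Sob-Orlicz}) gives
\[
\|w\|_{L^{2}(B,d\mu)} \;\leq\; C(r)\left(\int_{B}|\nabla_A w|^{2}\,d\mu\right)^{\frac{1}{2}}.
\]

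Next, I would unwind the normalized measure $d\mu=dx/|B|$: squaring both sides and multiplying through by $|B|$ produces
\[
\int_{B}|w|^{2}\,dx \;\leq\; C(r)^{2}\int_{B}|\nabla_A w|^{2}\,dx,
\]
which is exactly the $2$-$2$ Sobolev inequality (\ref{eq:2-2-Sob}) with $C(\Omega)=C(r)^{2}$. With this inequality established for all $w\in\qty(W^{1,2}_{A})_{0}(B)$, the hypotheses of Theorem~\ref{thm:existence} are satisfied, and its conclusion delivers the unique weak solution $u\in\qty(W^{1,2}_{A})_{0}(B)$ claimed by the corollary. There is essentially no hard step here: the entire argument is a one-line comparison of Young functions followed by citing Theorem~\ref{thm:existence}; the only thing to verify carefully is the normalization of $d\mu$ versus $dx$, which is handled by the factor of $|B|$ above.
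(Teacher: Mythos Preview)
Your proposal is correct and follows essentially the same route as the paper: use $\phi(t)\geq t^2$ together with Proposition~\ref{prop:scale} to deduce the $2$-$2$ Sobolev inequality (\ref{eq:2-2-Sob}), then invoke Theorem~\ref{thm:existence}. Your version is slightly more detailed in tracking the normalization $d\mu=dx/|B|$, but the argument is the same.
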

 \begin{proof}
     Indeed, suppose the Orlicz-Sobolev inequality (\ref{Sob-Orlicz}) holds with $\varphi(t)\geq t^{2}$ for all $t\geq 0$. Moreover, the Orlicz space defined by $\psi(t)=t^2$ coincides with $L^2$. Therefore, by Proposition \ref{prop:scale} the classical Sobolev inequality (\ref{eq:2-2-Sob}) holds and Theorem \ref{thm:existence} applies.
 \end{proof}

\subsection{Global Boundedness of Weak Solutions.} \label{Sufficiency}

We now arrive at the proof of the global boundedness estimate for weak solutions. 
The proof closely follows the argument of \cite[Chapter 4]{Luda}. However, the Orlicz-Sobolev inequality we assume is weaker than the one in \cite{Luda}, while our assumption on the right hand side $f$ is stronger. We therefore provide the details of the arguments that are necessary to verify in this new setting.
We start with a Caccioppli inequality, which is an analogue of \cite[Proposition 24 and Corollary 25]{Luda}. 

\begin{prop}\label{cacc}
Let $u$ be a weak solution to (\ref{Problem}) on $\Omega=B$ and define $u_+=\max\qty{u,0}$, then the following Caccioppoli inequality holds on a ball $B$:
\[
\int_{\qty{x\in B:u(x)>0}} |\nabla_A u_+|^2 \dd{\mu} \leq \int_{\qty{x\in B:u(x)>0}} u_+\|f\|_{L^{\infty}} \dd{\mu}
\]
where $\dd{\mu}=\frac{\dd{x}}{\abs{B}}$.
\end{prop}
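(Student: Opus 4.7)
The plan is to use $u_+$ itself as a test function in the weak formulation, which is the standard route to Caccioppoli-type estimates. Since $u\in \left(W_A^{1,2}\right)_0(B)$ is a weak solution, we have
\[
\int_B \nabla u \cdot A \nabla \varphi \,dx = -\int_B f\varphi \,dx \quad \text{for every } \varphi \in \left(W_A^{1,2}\right)_0(B).
\]
The first step is to verify that $u_+=\max\{u,0\}$ is an admissible test function, i.e.\ that $u_+\in \left(W_A^{1,2}\right)_0(B)$ and that the pointwise a.e.\ chain rule $\nabla u_+ = \chi_{\{u>0\}}\nabla u$ holds in the appropriate sense. This follows by approximating $u$ by Lipschitz compactly supported functions (which is possible by the definition of $(W_A^{1,2})_0$ as the closure of such functions in the $W_A^{1,2}$ norm), applying the standard positive-part chain rule for Lipschitz functions, and passing to the limit in the approximation, using the fact that $t\mapsto t_+$ is $1$-Lipschitz so both $u_+$ and the $A$-gradient identity are preserved in the limit.

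Granted that $u_+$ is a valid test function, the second step is a direct computation. Plugging $\varphi=u_+$ into the weak formulation and using the chain rule gives
\[
\int_B \nabla u \cdot A \nabla u_+ \,dx = \int_{\{u>0\}} \nabla u \cdot A \nabla u \,dx = \int_{\{u>0\}} [\nabla u]_A^2 \,dx = \int_{\{u>0\}} |\nabla_A u_+|^2 \,dx,
\]
where the last equality uses that $\nabla_A u_+ = \nabla_A u$ a.e.\ on $\{u>0\}$ and vanishes elsewhere.

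The third step is to bound the right-hand side. Since $u_+\ge 0$ is supported in $\{u>0\}$ and $f\in L^\infty(B)$,
\[
-\int_B f u_+ \,dx = -\int_{\{u>0\}} f u_+ \,dx \le \int_{\{u>0\}} |f|\,u_+ \,dx \le \|f\|_{L^\infty(B)} \int_{\{u>0\}} u_+ \,dx.
\]
Combining with the previous identity and dividing through by $|B|$ to convert $dx$ into $d\mu = dx/|B|$ yields the claimed Caccioppoli inequality.

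The main obstacle is really only the admissibility of $u_+$ as a test function in the degenerate Sobolev space $(W_A^{1,2})_0(B)$: the chain rule for $u\mapsto u_+$ is not automatic in the degenerate setting and must be justified by the Lipschitz-approximation argument sketched above. Once this is in place, the estimate is essentially a one-line testing argument followed by the trivial $L^\infty$ bound on $f$.
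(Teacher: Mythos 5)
Your proposal is correct and follows essentially the same route as the paper: take $u_+$ as the test function in the weak formulation, use the chain rule $\nabla_A u_+ = \chi_{\{u>0\}}\nabla_A u$, and bound the right-hand side by $\|f\|_{L^\infty}$. The only difference is that you spell out the Lipschitz-approximation justification for $u_+\in (W_A^{1,2})_0(B)$, which the paper simply asserts.
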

\begin{proof}
Let $v=u_+$ then we have $v\in\qty(W^{1,2}_A)_0(B)$ and therefore,
\begin{align*}
    \int_{B} \nabla u \cdot A \nabla v \dd{\mu} &= - \int_{B} f v \dd{\mu} \\
    \int_{\qty{x\in B:u(x)>0}} \nabla u \cdot A \nabla u_+ \dd{\mu} &= - \int_{\qty{x\in B:u(x)>0}} fu_+ \dd{\mu} \\
    \int_{\qty{x\in B:u(x)>0}} |\nabla_A u_+|^2 \dd{\mu} &\leq \int_{\qty{x\in B:u(x)>0}} u_+\|f\|_{L^{\infty}} \dd{\mu}.
\end{align*}
\end{proof}
\begin{cor}
Let $u$ be a weak solution to (\ref{Problem}) in $B$, and suppose that for some $P>0$ and a non-negative function $v\in W_A^{1,2}(B)$ there holds
\[
\|f\|_{L^{\infty}}\leq Pv(x),\qq{a.e.} x\in\{u>0\}\cap B.
\]
Then
\begin{equation}\label{modcap}
\|\nabla_Au_+\|_{L^2}^2\leq P\int(u_+v)\dd{\mu}.
\end{equation}
\end{cor}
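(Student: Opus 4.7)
The plan is to rerun the Caccioppoli computation from Proposition \ref{cacc}, but retain the pointwise bound on $f$ in place of the blunt $L^{\infty}$ estimate. The key observation is that the hypothesis $\|f\|_{L^{\infty}} \leq P v(x)$ a.e.\ on $\{u>0\}\cap B$ gives in particular the pointwise bound $|f(x)| \leq P v(x)$ for a.e.\ $x\in\{u>0\}\cap B$, which is sharper than what is used in the proof of Proposition \ref{cacc}.

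Concretely, I would first test the weak formulation of (\ref{Problem}) against the admissible function $u_+ \in (W^{1,2}_A)_0(B)$, giving
\[
\int_{B} \nabla u \cdot A \nabla u_+ \dd{\mu} = -\int_{B} f u_+ \dd{\mu}.
\]
On the left side, since $\nabla u = \nabla u_+$ a.e.\ on $\{u>0\}$ and $\nabla u_+ = 0$ a.e.\ on $\{u\leq 0\}$, the integrand equals $|\nabla_A u_+|^2$ a.e., so the left side is exactly $\|\nabla_A u_+\|_{L^2}^2$. On the right side, bound $-f u_+ \leq |f| u_+$ pointwise, and note that $u_+ = 0$ outside $\{u>0\}$, which allows us to restrict the domain of integration to $\{u>0\}\cap B$.

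At this stage, rather than replacing $|f|$ by the constant $\|f\|_{L^{\infty}}$ (as in Proposition \ref{cacc}), I would apply the hypothesis $|f(x)| \leq \|f\|_{L^{\infty}} \leq P v(x)$ on $\{u>0\}\cap B$ to obtain
\[
\|\nabla_A u_+\|_{L^2}^2 \leq \int_{\{u>0\}\cap B} |f|\, u_+ \dd{\mu} \leq P \int_{\{u>0\}\cap B} u_+ v \dd{\mu} = P \int_B u_+ v \dd{\mu},
\]
where the last equality again uses $u_+ = 0$ on $\{u\leq 0\}$ together with $v\geq 0$. This yields the claimed inequality (\ref{modcap}).

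There is no real obstacle here, since all ingredients are already in Proposition \ref{cacc} and its proof; the only substantive point is being careful not to throw away the pointwise control by passing prematurely to the $L^{\infty}$ norm. One minor technical check worth noting is that $u_+ v \in L^1(B)$ so that the right-hand side is finite: this follows from $u_+ \in (W^{1,2}_A)_0(B) \subset L^2(B)$ and $v \in W^{1,2}_A(B) \subset L^2(B)$ together with the Cauchy--Schwarz inequality.
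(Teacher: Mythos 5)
Your proof is correct and, modulo rerunning the Caccioppoli computation inline rather than invoking Proposition \ref{cacc} directly, it is the same argument as the paper's: bound $-fu_+\leq |f|u_+\leq \|f\|_{L^{\infty}}u_+$ and then substitute $\|f\|_{L^{\infty}}\leq Pv$ on the support of $u_+$. One small misdescription in your framing: the pointwise bound $|f(x)|\leq Pv(x)$ is not \emph{sharper} than the bound $|f(x)|\leq\|f\|_{L^{\infty}}$ used in Proposition \ref{cacc}; it is the weaker one, and the two are simply chained via the hypothesis $\|f\|_{L^{\infty}}\leq Pv(x)$ -- but this does not affect the correctness of the argument.
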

\begin{proof}
    By Proposition \ref{cacc} we have,
    \[
    \int_{\qty{u>0}} |\nabla_A u_+|^2 \dd{\mu} \leq \int_{\qty{u>0}} u_+\|f\|_{L^{\infty}} \dd{\mu}
    \]
    a simple substitution gives
    \begin{align*}
       \int_{\qty{u>0}} |\nabla_A u_+|^2 \dd{\mu} &\leq \int_{\qty{u>0}} u_+Pv \dd{\mu} \\
       \|\nabla_Au_+\|_{L^2}^2&\leq P\int(u_+v)\dd{\mu}.
    \end{align*}
\end{proof}
Next we will need the following Lemma
\begin{lemma}\label{lem:2-1}
Let $\varphi(t)$ be a Young function and let $\phi(t)$ be defined by $\phi(t)=\varphi(t^2)$. Then for all $u\in L^{\phi}(B)$, 
\[
\|u^2\|_{L^{\varphi}}\leq \|u\|_{L^{\phi}}^2\leq 4\|u^2\|_{L^{\varphi}}.
\]
\end{lemma}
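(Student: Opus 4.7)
The plan is to exploit directly the defining inequality of the Luxembourg norm, $\int_B \theta(f/k)\, d\mu \leq 1$, and to use the identity $\phi(t) = \varphi(t^2)$ to convert normalization integrals for $\phi$ into those for $\varphi$ via $\phi(u/k) = \varphi(u^2/k^2)$. Both bounds then reduce to unpacking definitions, with only the second requiring an extra convexity step.

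For the first inequality $\|u^2\|_{L^\varphi} \leq \|u\|_{L^\phi}^2$, I would fix any $k > \|u\|_{L^\phi}$. Because $\phi$ is non-decreasing on $[0,\infty)$ and even, the defining property of the Luxembourg norm gives $\int_B \phi(u/k)\, d\mu \leq 1$, which by the substitution $\phi(s)=\varphi(s^2)$ is exactly $\int_B \varphi(u^2/k^2)\, d\mu \leq 1$. By Definition \ref{def:LuxNorm} this forces $\|u^2\|_{L^\varphi} \leq k^2$, and letting $k \downarrow \|u\|_{L^\phi}$ finishes this direction.

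For the reverse inequality $\|u\|_{L^\phi}^2 \leq 4\|u^2\|_{L^\varphi}$, let $m > \|u^2\|_{L^\varphi}$ so that $\int_B \varphi(u^2/m)\, d\mu \leq 1$, and set $k = 2\sqrt{m}$. Then $\phi(u/k) = \varphi(u^2/(4m))$. Since $\varphi$ is convex with $\varphi(0)=0$, the standard inequality $\varphi(tx) \leq t\varphi(x)$ holds for every $t \in [0,1]$, so in particular $\varphi(u^2/(4m)) \leq \tfrac{1}{4}\varphi(u^2/m)$. Integrating yields $\int_B \phi(u/k)\, d\mu \leq \tfrac{1}{4} \leq 1$, whence $\|u\|_{L^\phi} \leq 2\sqrt{m}$; sending $m \downarrow \|u^2\|_{L^\varphi}$ completes the argument.

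The only mildly subtle point is handling the infimum in the Luxembourg norm, which may or may not be attained; this is dealt with cleanly by approaching the infimum from above along $k \downarrow \|u\|_{L^\phi}$ and $m \downarrow \|u^2\|_{L^\varphi}$. I note in passing that the convexity step is actually loose: choosing $k = \sqrt{m}$ rather than $2\sqrt{m}$ would already yield $\int_B \phi(u/k)\, d\mu \leq 1$, producing the sharper identity $\|u\|_{L^\phi}^2 = \|u^2\|_{L^\varphi}$. The factor $4$ in the lemma is therefore not essential, but the stated form is entirely sufficient for the Orlicz-Sobolev estimates that follow.
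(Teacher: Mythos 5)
Your proof is correct and follows essentially the same route as the paper's — both hinge on the identity $\phi(u/k)=\varphi(u^2/k^2)$ applied inside the defining integral of the Luxembourg norm, with your version being slightly more careful in approaching the infimum from above rather than plugging in the norm value directly. Your final observation is also correct: taking $k=\sqrt{m}$ instead of $2\sqrt{m}$ gives $\|u\|_{L^\phi}^2\leq\|u^2\|_{L^\varphi}$ without any convexity step, so combined with the first inequality the two quantities are in fact equal (the substitution $m=k^2$ is a bijection between the defining sets of the two Luxembourg norms), and the factor $4$ in the lemma's statement and in both proofs is slack but harmless.
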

\begin{proof}
For the first inequality using Definition \ref{def:LuxNorm} we need to show that
\[
\int_{B} \varphi\qty(\frac{u^2}{\|u\|_{L^{\phi}}^2})d\mu\leq 1,   
\]
and because we have defined $\phi(t)=\varphi(t^2)$, we may write
\[
\int_{B} \varphi\qty(\frac{u^2}{\|u\|_{L^{\phi}}^2})d\mu = \int_{B} \phi\qty(\frac{u}{\|u\|_{L^\phi}})d\mu \leq 1
\] 
so we have $\|u^2\|_{L^{\varphi}}\leq \|u\|_{L^{\phi}}^2$. To show the second inequality 
we need to show that
\[
\int \varphi\qty(\frac{4u^2}{\|u\|_{L^{\phi}}^2})d\mu\geq 1.    
\]
Because we have defined $\phi(t)=\varphi(t^2)$, we can write
\[
\int \varphi\qty(\frac{4u^2}{\|u\|^2_{L^{\phi}}})
= \int \phi\qty(\frac{2u}{\|u\|_{L^{\phi}}}).
\]
By definition (\ref{LuxNorm}), $\|u\|_{L^\phi}$ is the smallest number such that $\int \phi\qty(\frac{u}{\|u\|_{L^{\phi}}})\leq 1$. Thus
\[
\int \varphi\qty(\frac{4u^2}{\|w\|^2_{L^{\phi}}})=\int \varphi\qty(\frac{u^2}{\nicefrac{\|u\|^2_{L^{\phi}}}{4}})
= \int \phi\qty(\frac{u}{\nicefrac{\|u\|_{L^{\phi}}}{2}}) \geq 1,
\]
which concludes the proof.
\end{proof}
We are now ready to prove the $L^{\infty}$ estimate in Theorem \ref{thm:sufficiency}, and the argument follows closely the proof of \cite[Proposition 27]{Luda}.

   \begin{thm}\label{thm:boundedness}
Let $L=\nabla\cdot A\nabla $ with bounded measurable non-negative semidefinite matrix $A$, and $d$ a metric on $\R^n$. Suppose also that the following Orlicz-Sobolev inequality holds for all $v\in \left(W^{1,2}_{A}\right)_{0}(B)$ and the metric ball $B\subset \R^n$ 
\begin{equation}\label{eq:OS}
\| v \|_{L^{\phi}(B)} \leq C(B)\|\nabla_A v\|_{L^2(B)},
\end{equation}
where $\phi$ is defined by $\phi(t)=\Phi(t^2)$ with $\Phi=\Phi_N$ from Definition  \ref{bumpfam}, for some $N>1$. Then the unique weak solution $u$ to (\ref{problem'}) satisfies
\[
\sup_B|u| \leq C \|f\|_{L^{\infty}(B)}.
\]
\end{thm}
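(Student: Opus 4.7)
We proceed by a De Giorgi-type iteration in the Orlicz-Sobolev framework, closely following the scheme of \cite[Proposition 27]{Luda}. The plan is to estimate the measure of the superlevel sets $A(k) \defeq \{x \in B : u(x) > k\}$ and show that $\mu(A(k)) = 0$ for some $k$ controlled by $\|f\|_{L^\infty(B)}$; applying the same argument to $-u$ then yields the two-sided bound.

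For $k \geq 0$ the truncation $u_k \defeq (u-k)_+$ lies in $(W^{1,2}_A)_0(B)$ and is an admissible test function. The Caccioppoli estimate of Proposition \ref{cacc}, together with the hypothesized Orlicz-Sobolev inequality (\ref{eq:OS}) applied to $u_k$ and H\"older's inequality (\ref{eq:Holder}) (using that $u_k$ is supported on $A(k)$), combine to give
\[
\|u_k\|_{L^\phi}^2 \leq C^2 \|f\|_{L^\infty} \int u_k \mathbf{1}_{A(k)}\, d\mu \leq 2 C^2 \|f\|_{L^\infty}\,\|u_k\|_{L^\phi}\,\|\mathbf{1}_{A(k)}\|_{L^{\tilde{\phi}}},
\]
and hence $\|u_k\|_{L^\phi} \leq 2 C^2 \|f\|_{L^\infty} \|\mathbf{1}_{A(k)}\|_{L^{\tilde{\phi}}}$. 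Using $\|\mathbf{1}_E\|_{L^\theta} = 1/\theta^{-1}(1/\mu(E))$ together with the asymptotics of $\Phi_N$ and its Legendre conjugate, one checks that for $\mu(A(k))$ small, and with $L(k) \defeq \ln(1/\mu(A(k)))$,
\[
\|\mathbf{1}_{A(k)}\|_{L^\phi} \asymp \mu(A(k))^{1/2} L(k)^{N/2}, \qquad \|\mathbf{1}_{A(k)}\|_{L^{\tilde{\phi}}} \asymp \mu(A(k))^{1/2}/L(k)^{N/2}.
\]
Combining this with the pointwise bound $(h-k)\mathbf{1}_{A(h)} \leq u_k$ valid for $h > k$, one extracts the key recursive inequality
\[
(h-k)^2\, L(h)^N L(k)^N\, \mu(A(h)) \leq C' \|f\|_{L^\infty}^2\, \mu(A(k)).
\]

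To launch the iteration, taking $k=0$ in the $L^\phi$ bound for $u_k$ and then Chebyshev in $L^\phi$ supplies a starting level $k_0 = K\|f\|_{L^\infty}$ for which $\mu(A(k_0))$ is as small as desired. Define $\{k_n\}$ inductively by $\mu(A(k_n)) = 2^{-n}\mu(A(k_0))$, so that $L(k_n) \asymp n$ for large $n$. Substituting into the recursion gives
\[
k_{n+1} - k_n \leq \frac{C''\|f\|_{L^\infty}}{L(k_n)^{N/2} L(k_{n+1})^{N/2}} \lesssim \frac{\|f\|_{L^\infty}}{n^N}.
\]
Since $N > 1$ the series $\sum n^{-N}$ converges, so $k_\infty \defeq \lim_n k_n$ satisfies $k_\infty \leq k_0 + C'''\|f\|_{L^\infty} \leq C\|f\|_{L^\infty}$. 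By construction $\mu(A(k_\infty)) = 0$, so $u \leq C\|f\|_{L^\infty}$ a.e.\ in $B$, as desired.

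The main obstacle is the delicate bookkeeping of the logarithmic factors in the recursion: the gain $L(h)^N$ on the left must carry the $L(k)^N$ loss on the right, and the net effect produces a level increment of order $n^{-N}$ per step. This series is summable precisely when $N>1$ and just fails at $N=1$, which is exactly where the strength of the Orlicz-Sobolev hypothesis enters in an essential way. The technical heart of the argument is thus the careful asymptotic analysis of $\tilde{\phi}$ and of the Orlicz norms of indicators, with the equivalence (\ref{eq:Or_eqiuv}) between Luxembourg and Orlicz norms invoked to apply H\"older's inequality cleanly.
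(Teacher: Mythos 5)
Your argument is essentially correct, but it follows a genuinely different route from the paper's. The paper's proof fixes a specific, explicit level sequence $C_k = \tau\|f\|_{L^\infty}\bigl(1 - c(k+1)^{-\epsilon/2}\bigr)$ accumulating at $\tau\|f\|_{L^\infty}$, works with the $L^2$ quantities $U_k = \int u_k^2\,d\mu$, derives the nonlinear recursion
\[
U_{k+1}\leq C(k+2)^{(2+\epsilon)/2}\,U_k\,\Gamma\bigl(C(k+2)^{2+\epsilon}U_k\bigr),\qquad\Gamma(t)=\frac{1}{\tilde\Phi^{-1}(1/t)},
\]
(packaging the dual Orlicz inverse into $\Gamma$ rather than expanding it), and then cites the iteration lemma of \cite[Theorem 30]{Luda} to conclude. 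You instead iterate directly on the measures of superlevel sets $\mu(A(k))$, exploit the Chebyshev-type bound $(h-k)\|\mathbf{1}_{A(h)}\|_{L^\phi}\leq\|u_k\|_{L^\phi}$ in the $L^\phi$ scale, and make the dependence on $N$ explicit by computing the two-sided asymptotics $\|\mathbf{1}_E\|_{L^\phi}\asymp\mu(E)^{1/2}L^{N/2}$ and $\|\mathbf{1}_E\|_{L^{\tilde\phi}}\asymp\mu(E)^{1/2}/L^{N/2}$ with $L=\ln(1/\mu(E))$; the resulting level increment $d_n\lesssim\|f\|_{L^\infty}/n^{N}$ is summable exactly when $N>1$, which is the same threshold the paper's $\Gamma$-recursion enforces. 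What each approach buys: your version is self-contained (no external iteration lemma) and makes crystal clear where the hypothesis $N>1$ enters; the paper's version avoids needing the asymptotic inversion of $\tilde\Phi_N$, replacing it with the abstract function $\Gamma$ and a reference.

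Two small points you gloss over, both fixable. First, defining $k_n$ implicitly by $\mu(A(k_n))=2^{-n}\mu(A(k_0))$ is not quite legitimate since $k\mapsto\mu(A(k))$ may jump; the clean formulation is to set $d_n = C\|f\|_{L^\infty}/L(k_n)^{N}$ with $C$ large, put $k_{n+1}=k_n+d_n$, and verify that the recursion forces $\mu(A(k_{n+1}))\leq\tfrac12\mu(A(k_n))$, which then gives $L(k_n)\gtrsim n$ and the same summable series. Second, the asymptotics for $\|\mathbf{1}_E\|_{L^{\phi}}$ and $\|\mathbf{1}_E\|_{L^{\tilde\phi}}$ hold only once $\mu(E)$ is below a fixed threshold (so that the formula $\Phi_N(t)=t(\ln t)^N$ for $t\geq E_N$ is the active branch); you must choose $k_0=K\|f\|_{L^\infty}$ with $K$ large enough so that $\mu(A(k_0))$ lies below that threshold before starting the iteration, and then note the measures only decrease thereafter. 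With those amendments the argument closes.
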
 
\begin{proof}
We first define the family of truncations
\[
u_k=(u-C_k)_+
\]
where
\[
C_k=\tau\|f\|_{L^{\infty}}\qty(1-c(k+1)^{-\frac{\epsilon}{2}}),\qq{} \tau\geq 1,
\]
and denote
\[
U_k\equiv\int_{B}\abs{u_k}^2 \dd{\mu}
\]
where $\dd{\mu}=\frac{\dd{x}}{\abs{B}}$.
Since $u_k\in \left(W^{1,2}_{A}\right)_{0}(B)$ for all $k$, using Hölder's inequality for Orlicz spaces (\ref{eq:Holder}) we can write
\begin{equation}\label{boundgoal}
\int u_{k+1}^2\dd{\mu}\leq C\|u_{k+1}^2\|_{L^{\Phi}}\cdot\|1\|_{L_{\qty{u_{k+1}>0}}^{\Tilde{\Phi}}}
\end{equation}
where the norms are taken with respect to the measure $\mu$.
Our first goal is to bound the first factor on the right. Note that if $u_{k+1}>0$ we have
\[
u>C_{k+1}=\tau\|f\|_{L^{\infty}}\qty(1-c(k+2)^{-\frac{\epsilon}{2}}),
\]
which implies that
\begin{align*}
u_k=\qty(u-C_k)_+&>c\tau\|f\|_{L^{\infty}}\qty[(k+1)^{-\frac{\epsilon}{2}}-(k+2)^{-\frac{\epsilon}{2}}] \\
&=c\tau\|f\|_{L^{\infty}}(k+1)^{-\frac{\epsilon}{2}}\qty[1-\qty(\frac{k+1}{k+2})^{\frac{\epsilon}{2}}]\\
&\geq c\tau\|f\|_{L^{\infty}}(k+1)^{-\frac{\epsilon}{2}}\qty(1-\frac{k+1}{k+2})\frac{\epsilon}{2}\qty(\frac{k+1}{k+2})^{\frac{\epsilon}{2}-1}\\
\end{align*}
Note that $\frac{k+1}{k+2}<1$ which allows us to conclude that 
\[
u_k\geq\frac{\epsilon}{2}c\tau\|f\|_{L^{\infty}}(k+2)^{-1-\frac{\epsilon}{2}}
\]
on the set where $u_{k+1}>0$, thus
\begin{equation}\label{modcapbound}
\|f\|_{L^{\infty}}\leq \frac{2}{c\tau\epsilon}(k+2)^{1+\frac{\epsilon}{2}}u_k\leq\frac{2}{c\epsilon}(k+2)^{1+\frac{\epsilon}{2}}u_k
\end{equation}
since $\tau\geq 1$.
Next, since $u$ is a weak solution it follows that $u-C_{k+1}$ is also a weak solution so (\ref{modcapbound}) implies we can use (\ref{modcap}) with $v=u_k$ and $P=\frac{2}{c\epsilon}(k+2)^{1+\frac{\epsilon}{2}}$ so we have
\begin{align*}
\int|\nabla_A u_{k+1}|^2\dd{\mu}
 &=\int\abs{\nabla_A\qty(u-C_{k+1})_+}^2 \dd{\mu} \\
    &\leq C\frac{2}{c\epsilon}(k+2)^{1+\frac{\epsilon}{2}}\int(u_{k+1}u_k)\dd{\mu} \\
    &\leq C\frac{2}{c\epsilon}(k+2)^{1+\frac{\epsilon}{2}}\int u_k^2\dd{\mu}.
\end{align*}
Applying (\ref{eq:OS}) and using Lemma \ref{lem:2-1} with $\varphi=\Phi$ we have
\[
\|u_{k+1}^2\|_{L^{\Phi}}\leq \|u_{k+1}\|_{L^{\phi}}^2\leq C\|\nabla_A u_{k+1}\|^2,
\]
which combining with the above inequality gives
\begin{equation}\label{Bound1}
     \|u_{k+1}^2\|_{L^{\Phi}}\leq C(k+2)^{\frac{2+\epsilon}{2}}\int u_k^2 \dd{\mu}.
\end{equation}
Now we want to bound the second factor on the right hand side of (\ref{boundgoal}), i.e. $\|1\|_{L^{\Tilde{\Phi}}}$. Consider the function
\[
\Gamma(t):= \frac{1}{\Tilde{\Phi}^{-1}(\frac{1}{t})}
\]
and note that
\[
\int_{\{u_{k+1}>0\}}\Tilde{\Phi}\qty(\frac{1}{a})\dd{\mu}=\Tilde{\Phi}\qty(\frac{1}{a})\mu(\{u_{k+1}>0\})
\]
for all $a>0$. Now let
\[
a=\Gamma\qty(\mu(\{u_{k+1}>0\}))=\frac{1}{\Tilde{\Phi}^{-1}\qty(\frac{1}{\mu(\{u_{k+1}>0\})})}
\] 
so that
\[
\int_{\{u_{k+1}>0\}}\Tilde{\Phi}\qty(\frac{1}{a})\dd{\mu}=1,
\]
and therefore
\begin{equation}\label{1bound}
\|1\|_{L^{\Tilde{\Phi}}\qty(\{u_{k+1}>0\})}\leq a = \Gamma\qty(\mu(\{u_{k+1}>0\})).
\end{equation}
Now recall that we showed 
\[
\{u_{k+1}>0\}\subset\qty{u_k>\frac{\epsilon}{2}c\tau\|f\|_{L^{\infty}}(k+2)^{-1-\frac{\epsilon}{2}}}
\]
where $\tau\geq 1$ which follows from the observation that 
\[
u_{k+1}>0
\]
implies
\[
u_k>\tau\|f\|_{L^{\infty}}\qty(1-c(k+2)^{-\frac{\epsilon}{2}}).
\]
Using Chebyshev's inequality this gives
\begin{equation}\label{chev}
\mu\left(\{u_{k+1}>0\}\right)\leq\mu\left(\qty{u_k>\frac{\epsilon}{2}c\tau\|f\|_{L^{\infty}}(k+2)^{-1-\frac{\epsilon}{2}}}\right)\leq\frac{4}{c^2\tau^2\|f\|_{L^{\infty}}^2\epsilon^2}(k+2)^{2+\epsilon}\int u_k^2\dd{\mu}.
\end{equation}
Combining (\ref{1bound}) and (\ref{chev}) we obtain
\begin{equation}\label{Bound2}
    \|1\|_{L_{\qty{u_{k+1}>0}}^{\Tilde{\Phi}}}\leq \Gamma\qty(C(k+2)^{2+\epsilon}\int u_k^2).
\end{equation}
Finally substituting (\ref{Bound1}) and (\ref{Bound2}) into (\ref{boundgoal}) we conclude that
\begin{align*}
\int u_{k+1}^2\dd{\mu}&\leq C(k+2)^{\frac{2+\epsilon}{2}}\int u_k^2\cdot\Gamma\qty(C(k+2)^{2+\epsilon}\int u_k^2) \\
U_{k+1} &\leq C(k+2)^{\frac{2+\epsilon}{2}}U_k\Gamma\qty(C(k+2)^{2+\epsilon}U_k).
\end{align*}
This estimate is the same as the one obtained in the proof of \cite[Theorem 30]{Luda}, so the rest of the proof can be repeated verbatim which gives 
\[
\sup_B|u| \leq C\|f\|_{L^{\infty}(B)}.
\]
\end{proof}

\section{Almost Necessity}\label{NecExtUnq}

In this section we demonstrate the almost necessity of an Orlicz-Sobolev inequality for the existence, uniqueness, and boundedness of solutions to (\ref{Problem}), namely we prove Theorem \ref{thm:necessity}.
We start with two simple technical lemmas.
\begin{lemma}\label{supBdd}
Let $\varphi:\R\to[0,\infty]$ be a Young function and let $\Tilde{\varphi}$ be the convex conjugate, or dual, of $\varphi$ as defined in Definition \ref{dual}. Let $B\subset \R^n$ be any ball, and define  
\[
X=\{f\in L^{\tilde{\varphi}}\mid \int_B\Tilde{\varphi}(|f|)d\mu \leq 1\} \qquad \text{and} \qquad Y=\{f\in L^{\tilde{\varphi}}\mid f\geq 0 \text{ and } \int_B\Tilde{\varphi}(f)d\mu\leq 1\}.
\]
Then, 
\begin{equation}
    \sup_X \int_B w^2f\ d\mu = \sup_Y \int_B w^2f\ d\mu
\end{equation}
for any $w\in Lip_0(B)$.
\end{lemma}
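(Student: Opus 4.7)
The plan is to observe that the inequality $\sup_Y \int_B w^2 f\, d\mu \leq \sup_X \int_B w^2 f\, d\mu$ is immediate from the inclusion $Y \subset X$, so the only content is in the reverse inequality. For that direction, I would use the symmetrization $f \mapsto |f|$: since $\tilde{\varphi}$ is a Young function and hence even (the convex conjugate of an even function is even), we have $\tilde{\varphi}(|f(x)|) = \tilde{\varphi}(f(x))$ pointwise, so any $f \in X$ satisfies $\int_B \tilde{\varphi}(|f|)\, d\mu = \int_B \tilde{\varphi}(|f|)\, d\mu \leq 1$, which together with $|f| \geq 0$ shows $|f| \in Y$.

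Then, since $w^2 \geq 0$ pointwise on $B$, for every $f \in X$ we get
\[
\int_B w^2 f\, d\mu \leq \int_B w^2 |f|\, d\mu \leq \sup_{g \in Y} \int_B w^2 g\, d\mu.
\]
Taking the supremum of the left-hand side over $f \in X$ yields $\sup_X \int_B w^2 f\, d\mu \leq \sup_Y \int_B w^2 f\, d\mu$, which combined with the trivial direction gives equality.

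There is really no main obstacle here; the only thing to be careful about is justifying that $\tilde{\varphi}$ is even, which follows directly from Definition \ref{dual} because $\varphi$ itself is even, so for any $t \in \R$ the function $s \mapsto st - \varphi(s)$ has the same supremum as $s \mapsto s(-t) - \varphi(s)$ (via the change of variables $s \mapsto -s$). The fact that $w \in \operatorname{Lip}_0(B)$ does not actually enter the proof; only the nonnegativity of $w^2$ is used, so the statement would hold for any measurable $w$ for which the integrals make sense.
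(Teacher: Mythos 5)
Your proof is correct. The paper's argument is in the same spirit but uses a slightly different decomposition: it writes $g = g^+ - g^-$, shows $g^+ \in Y$ by combining evenness of $\tilde{\varphi}$ with its monotonicity on $[0,\infty)$ (via $\tilde{\varphi}(g^+) \leq \tilde{\varphi}(g^+ + g^-) = \tilde{\varphi}(|g|) \leq 1$), and then bounds $\int_B w^2 g = \int_B w^2 g^+ - \int_B w^2 g^- \leq \sup_Y \int_B w^2 f$ because the subtracted term is nonnegative. Your route via $|f|$ is marginally cleaner: since the membership condition for $X$ is literally $\int_B \tilde{\varphi}(|f|)\,d\mu \leq 1$, the fact that $|f| \in Y$ is immediate and needs no appeal to monotonicity, only to $|f| \geq 0$; the monotonicity of the integrand $w^2 f \leq w^2|f|$ is absorbed into one pointwise inequality rather than split across positive and negative parts. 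Both arguments use exactly the two ingredients you identify — evenness of $\tilde{\varphi}$ and nonnegativity of $w^2$ — and you are right that $w \in \operatorname{Lip}_0(B)$ plays no role beyond guaranteeing the integrals are well-defined.
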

\begin{proof}
    First note that $Y\subset X$. Thus, it suffices to show that 
    \[
    \int_B w^2 g \leq \sup_Y \int_B w^2f \qquad \text{for all $g\in X\backslash Y$}.
    \]
    Let $g\in X\backslash Y$, and write $g^+$ and $g^-$ for the positive and negative parts of $g$ respectively. 
    
    Note that since $\tilde{\varphi}$ is even and non-decreasing on $[0,\infty)$ and non-negative on $\R$, we have 
 \begin{align*}
    \int_B \tilde{\varphi}(g^+)
    & \leq \int_B \tilde{\varphi}(g^++g^-)\\
    & =\int_B \tilde{\varphi}(|g|)\\
    & \leq 1\quad  \text{for all $g\in X$}.
    \end{align*}
In particular, we conclude that $g^+\in Y$. Therefore,
    \begin{align*}
        \int_B w^2 g 
         = \int_B w^2 g^+ - \int_B w^2 g^-
         \leq \sup_{f\in Y} \int_B w^2 f - \int w^2g^-
         \leq \sup_{f\in Y} \int_B w^2 f.
    \end{align*}
    The last inequality follows from the fact that since $w^2$ and $g^-$ are both non-negative, it must be the case that $\int_B w^2g^-$ is also non-negative.
\end{proof}
\begin{lemma}\label{boundLuxNorm}
Let $\varphi:\R\to [0,\infty]$ be a Young function and $\Tilde{\varphi}$ be its dual. Furthermore, define the sets $X$ and $Y$ as in Lemma \ref{supBdd}. Then, for all $w\in Lip_0(B)$,
\[
\|u\|_{L^{\varphi}} \leq \sup_{f\in Y}\int_B uf\ d\mu.
\]
\end{lemma}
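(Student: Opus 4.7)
The plan is to chain two ingredients already set up in the paper: the norm equivalence (\ref{eq:Or_eqiuv}) together with Definition \ref{OrliczNorm}, and the sup identity of Lemma \ref{supBdd}. First I would combine the left-hand inequality of (\ref{eq:Or_eqiuv}) with the second characterization of the Orlicz norm in Definition \ref{OrliczNorm} to write
\[
\|u\|_{L^{\varphi}} \;\leq\; |u|_{L^{\varphi}} \;=\; \sup_{g \in X} \int_B u g \, d\mu,
\]
where $X$ is exactly the set appearing in Lemma \ref{supBdd}. This step is immediate from the definitions.

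Next I would invoke Lemma \ref{supBdd} to replace $\sup_X$ by $\sup_Y$. That lemma is stated with the integrand factor $w^2$, but an inspection of its proof reveals that the role of $w^2$ is purely to ensure non-negativity of the factor multiplying $g$: the splitting $g = g^+ - g^-$, the observation that $g^+ \in Y$, and the discarding of $\int (\text{non-negative})\, g^- \geq 0$ go through verbatim for any non-negative factor. Hence under the assumption $u \geq 0$ (the intended use case, where the role of $u$ will be played in the later necessity argument by a square or an absolute value of a Lipschitz function) one gets
\[
\sup_{g \in X} \int_B u g \, d\mu \;=\; \sup_{f \in Y} \int_B u f \, d\mu,
\]
and substituting this equality into the previous display closes the argument.

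I do not foresee a substantive obstacle; the only point that merits care is the sign convention. For genuinely signed $u$ the claim fails in general, since restricting to $f \geq 0$ extracts the Orlicz norm of $u^+$ rather than of $u$. One therefore either reads the $u$ in the statement as non-negative (which is what is needed downstream) or replaces $u$ by $|u|$ throughout, using that $\varphi$ is even so $\|u\|_{L^{\varphi}} = \||u|\|_{L^{\varphi}}$. In either reading the proof is the two-step chain above, and the lemma is really a packaging statement that exposes the supremum over non-negative $f \in Y$ in the form that will be convenient when constructing a right-hand side $f$ for the Dirichlet problem in the proof of Theorem \ref{thm:necessity}.
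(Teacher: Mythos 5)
Your proof is correct and follows the same route as the paper's: chain $\|u\|_{L^\varphi}\le |u|_{L^\varphi}$ from (\ref{eq:Or_eqiuv}), rewrite $|u|_{L^\varphi}$ as $\sup_{g\in X}\int_B ug\,d\mu$ via Definition \ref{OrliczNorm} (using that $\tilde\varphi$ is even to pass between $\int\tilde\varphi(g)\le1$ and $\int\tilde\varphi(|g|)\le1$), and then pass from $\sup_X$ to $\sup_Y$ via Lemma \ref{supBdd}. One difference worth keeping: you explicitly flag that Lemma \ref{supBdd} is stated only for the nonnegative integrand $w^2$, and that the transfer to $\sup_Y$ really requires $u\ge 0$; the paper elides this. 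This is a genuine clarification rather than pedantry --- the lemma as printed has a variable mismatch (it quantifies over $w\in Lip_0(B)$ but concludes about an unrelated $u$), and its own proof contains a circular self-reference where it says ``using Lemma \ref{boundLuxNorm}'' when Lemma \ref{supBdd} is clearly meant. Your reading --- that $u$ stands for a nonnegative function, in practice $w^2$, as is indeed how the result is invoked in the proof of Theorem \ref{necessity} --- repairs both and is what the authors intended. For a signed $u$ the stated inequality can fail, exactly as you note, so the nonnegativity hypothesis should be made explicit.
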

\begin{proof}
As before let $d\mu=dx/|B|$, and recall from (\ref{eq:Or_eqiuv}) that 
\[
\|u\|_{L^{\varphi}} \leq |u|_{L^{\varphi}} =\sup\left\{ \int ug\ d\mu : \int \Tilde{\varphi}(g)\leq 1 \right\}
=\sup\left\{ \int ug\ d\mu : \int \Tilde{\varphi}(|g|)\leq 1 \right\},
\]
where the last equality is due to the fact that $\Tilde{\varphi}$ is even by definition of a Young function.
Finally, using Lemma \ref{boundLuxNorm} we have
\[
\sup\left\{ \int ug\ d\mu : \int \Tilde{\varphi}(|g|)\leq 1 \right\}=\sup_{g\in X}\int_B ug\ d\mu \leq \sup_{g\in Y}\int_{B} ug\ d\mu,
\]
which concludes the result.
\end{proof}
We are now ready to prove Theorem \ref{thm:necessity}, which we state again here for convenience.
\begin{thm}\label{necessity}
    Let $\varphi:[0,\infty] \rightarrow [0,\infty]$ be a Young function that satisfies $\varphi(t) > t$ for all $t>0$, and let $\phi(t)=\varphi(t^2)$. Additionally, let $f\in L^{\Tilde{\varphi}}(B)$ and assume that all
    weak solutions, $u\in \left(W^{1,2}_A\right)_0(B)$,
    to (\ref{Problem}) satisfy the global boundedness estimate $\sup_B|u|\leq
    C\|f\|_{L^{\Tilde{\varphi}}(B)}$. Then the following Orlicz-Sobolev inequality holds:
    \begin{align*}
        \|v\|_{L^\phi(B,d\mu)} \leq C\|\nabla_A v\|_{L^2(B,d\mu)} \quad \text{ for all } v\in \left(W_A^{1,2}\right)_0(B).
    \end{align*}
\end{thm}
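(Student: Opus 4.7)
The strategy is a duality argument: we convert the hypothesis that solutions of the Dirichlet problem are $L^\infty$-bounded by $L^{\tilde\varphi}$ data into the desired Sobolev estimate by testing against functions in the unit ball of $L^{\tilde\varphi}$. First, since $\phi(t)=\varphi(t^2)$, Lemma \ref{lem:2-1} gives $\|v\|_{L^\phi}^2 \leq 4\|v^2\|_{L^\varphi}$, and then Lemma \ref{boundLuxNorm} applied to $v^2$ in place of $u$ yields
\[
\|v\|_{L^\phi}^2 \leq 4\sup_{g\in Y}\int_B v^2 g\,d\mu,
\]
where $Y=\{g\geq 0 : \int_B \tilde{\varphi}(g)\,d\mu \leq 1\}$. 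It therefore suffices to establish $\int_B v^2 g\, d\mu \leq C\|\nabla_A v\|_{L^2(B,d\mu)}^2$ for every $g\in Y$, with $C$ independent of $g$.

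Fix $g\in Y$ and assume $v\in Lip_0(B)$ (extending by density at the end). Since $\|g\|_{L^{\tilde\varphi}}\leq 1$, the hypothesis furnishes a unique weak solution $u_g\in (W^{1,2}_A)_0(B)$ of $\nabla\cdot A\nabla u_g = g$ with $\|u_g\|_{L^\infty(B)}\leq C_0$. Testing the weak formulation against $\psi=v^2\in Lip_0(B)$ and applying the Cauchy-Schwarz inequality first for the inner product of Definition \ref{innerProduct} and then in $L^2$, we obtain
\[
\int_B g v^2\,d\mu = -2\int_B v\,(\nabla u_g\cdot A\nabla v)\,d\mu \leq 2\int_B |v|\,[\nabla u_g]_A[\nabla v]_A\,d\mu \leq 2 J V,
\]
where $J:=\bigl(\int_B v^2 [\nabla u_g]_A^2\,d\mu\bigr)^{1/2}$ and $V:=\|\nabla_A v\|_{L^2(B,d\mu)}$.

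To close the loop I would then test the weak formulation against $\psi=u_g v^2$, which lies in $(W^{1,2}_A)_0(B)$ since $u_g\in(W^{1,2}_A)_0\cap L^\infty$ and $v^2$ is bounded Lipschitz with compact support. Using the product rule and the $L^\infty$ bound on $u_g$,
\[
J^2 = -\int_B g u_g v^2\,d\mu - 2\int_B u_g v\,(\nabla u_g\cdot A\nabla v)\,d\mu \leq C_0 \int_B g v^2\,d\mu + 2 C_0 J V \leq 4 C_0 J V,
\]
where the last step recycles the bound $\int_B g v^2\leq 2JV$. Dividing by $J$ (and treating $J=0$ as a trivial case) yields $J\leq 4C_0 V$, whence $\int_B g v^2\,d\mu \leq 8 C_0 V^2$. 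Combining this with the initial duality reduction produces $\|v\|_{L^\phi}\leq C\|\nabla_A v\|_{L^2}$ for $v\in Lip_0(B)$, and a standard Fatou-type density argument extends the estimate to all $v\in (W^{1,2}_A)_0(B)$.

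The main technical obstacle, beyond bookkeeping the constants, is justifying that $u_g v^2$ is an admissible test function: this relies on the product rule in degenerate Sobolev spaces together with $u_g\in L^\infty\cap(W^{1,2}_A)_0$, and may require approximating $u_g$ by Lipschitz truncations. A secondary subtlety is the Fatou-type passage from $Lip_0(B)$ to $(W^{1,2}_A)_0(B)$, since $(W^{1,2}_A)_0$-convergence need not imply $L^\phi$-convergence; however, extracting an $\mu$-a.e.\ convergent subsequence and invoking lower semicontinuity of the Luxemburg norm suffices.
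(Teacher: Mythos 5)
Your proof is correct and follows essentially the same route as the paper: test the weak formulation against $v^2$ and then against $u_g v^2$, combine Cauchy--Schwarz with the $L^\infty$ bound on the solution, and pass from $\varphi$ to $\phi$ via Lemma \ref{lem:2-1} together with the duality Lemmas \ref{supBdd} and \ref{boundLuxNorm}. The only differences are cosmetic: you fix $g$ in the unit ball of $L^{\tilde\varphi}$ up front and divide the estimate $J^2\leq 4C_0JV$ by $J$, whereas the paper derives the pointwise bound first and absorbs the quadratic term via Young's inequality before invoking duality.
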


Note that the global boundedness condition is different from that in the sufficiency result. We previously demonstrated that a $\phi-2$ Orlicz-Sobolev inequality with sufficiently large $\phi$ gives the estimate, $\sup_B|u|\leq C\|f\|_{L^\infty(B)}$, for all weak solutions, $u$, to (\ref{Problem}) with $f\in L^{\infty}(B)$. However, in order to prove necessity of a $\phi-2$ Orlicz-Sobolev inequality we require a stronger condition; namely, that all weak solutions to (\ref{Problem}) with the right hand side in a larger class, i.e. $f\in L^{\Tilde{\varphi}}(B)$, are bounded.
Hence the term ``almost necessity". 

\begin{proof}
    The proof is similar to the proof of Lemma 102 in \cite{Sawyer}, and the proof in Sections 1 and 2 of Chapter 9 in \cite{Luda2}.
    
        Let $u\in \left(W^{1,2}_A\right)_0$ be a weak solution to (\ref{Problem}), i.e. 
    \[
    \int_B \nabla \psi\cdot A \nabla u \ d\mu=-\int_B \psi f\ d\mu
    \]
    for all $\psi\in Lip_0(B)$, and assume $f\geq 0$. For any $w\in Lip_0(B)$, we therefore have,
    \[
    -\int_B \nabla w^2 \cdot A \nabla u=\int_B w^2f,
    \]
    since $w^2\in Lip_0(B)$. By applying the chain rule to $\nabla w^2$ and using the inner product from Definition \ref{innerProduct}, we see that 
    \begin{equation}\label{initialInequality}
    \int_B w^2f = -2\int_B w \langle\nabla w, \nabla u\rangle \leq 2\qty(\int_B w^2 [\nabla u]^2_A)^{\frac{1}{2}}\qty(\int_B [\nabla w]^2_A)^{\frac{1}{2}},
    \end{equation}
     where the inequality follows from an application of the Cauchy-Schwartz inequality followed by the H\"older inequality.
    Now analyzing the first term in the above inequality, we observe that,
    \begin{equation}\label{term1intermediary}
    \int_B w^2[\nabla u]^2_A = \int_B w^2 \nabla u \cdot A\nabla u.
    \end{equation}
    Furthermore, since $Lip_0(B)$ is dense in $\left(W^{1,2}_A\right)_0$ we can take $w^2u$ as a test function in the definition of a weak solution to obtain
    \begin{align*}
        -\int w^2uf 
        & = \int \nabla(w^2u)\cdot A\nabla u\\
        & = \int (2w\nabla w u + w^2\nabla u)\cdot A \nabla u\\
        & = 2\int wu \nabla w\cdot A\nabla u + \int_B w^2\nabla u\cdot A\nabla u.
    \end{align*}
    Therefore,
    \[
    \int w^2\nabla u\cdot A\nabla u = -2\int wu\nabla w\cdot A \nabla u - \int w^2uf.
    \]
    
    Hence, (\ref{term1intermediary}) becomes
    \begin{align*}
        \int w^2[\nabla u]^2_A
        & = \int w^2 \nabla u\cdot A\nabla u\\
        & = -2\int wu\nabla w\cdot A \nabla u - \int w^2uf\\
        & = -2\int \langle u\nabla w,w\nabla u\rangle - \int uw^2 f\\
        &\leq \frac{1}{2} \int w^2[\nabla u ]^2_A+8\int u^2[\nabla w]^2_A+\int |u|w^2 |f|,
    \end{align*}
    where the final estimate follows from the Cauchy-Schwartz Inequality followed by Young's Inequality.
    Absorbing the first term on the right to the left-hand side, results in 
    \begin{align*}
    \int w^2[\nabla u]^2_A &\leq C\qty(\sup_B |u|)^2\int[\nabla w]^2_A+C\qty(\sup_B |u|)\int w^2 |f|\\
    &\leq C\max\qty{{\qty(\sup_B |u|)^2\int[\nabla w]^2_A ,\qty(\sup_B |u|)\int w^2 |f|}}\\
    & = C\max\qty{{\qty(\sup_B |u|)^2\int[\nabla w]^2_A ,\qty(\sup_B |u|)\int w^2 f }},
    \end{align*}
    where the last equality follows from the assumption that $f$ is non-negative.
    We claim that comparing $\int w^2[\nabla u]_A^2$ to either term inside the maximum results in equivalent inequalities. First observe that comparing $\int w^2[\nabla u]_A^2$ to $\left ( \sup_B|u|\right )^2\int [\nabla w]_A^2$ and combining with (\ref{initialInequality}) results in
    \begin{align*}
        \int w^2 f &\leq C\qty(\sup_B |u|)\int[\nabla w]^2_A\\
        &=C \qty(\sup_B |u|)\|\nabla_A w\|^2_{L^2} \\
        &\leq C\|f\|_{L^{\Tilde{\varphi}}}\|\nabla_A w\|^2_{L^2}
    \end{align*}
    where the final inequality follows from the global boundedness estimate.
    Now comparing $\int w^2[\nabla u]_A^2$ to $\sup_B|u|\int w^2 f$ and combining with (\ref{initialInequality}) results in
    \[
        \int w^2f
        \leq \left (\sup_B|u|\int w^2 f\right )^{1/2} \left(\int[\nabla w]_A^2\right)^{1/2}.
    \]
   Combining with the global boundedness estimate, this becomes
    \begin{equation}\label{eq:1}
        \int w^2f
         \leq \|f\|_{L^{\Tilde{\varphi}}} \int[\nabla w]_A^2,
    \end{equation}
    which is the same estimate as above.
Using the equivalent definition of the Orlicz norm (\ref{OrliczNorm}) and Lemma \ref{supBdd}, we know that 
    \[
        |w^2|_{L^{\varphi}}
        =\sup\left\{\int_B w^2 f:\ \int_B \Tilde{\varphi}(f)\leq 1\  \text{and}\  f\geq 0\right\}
        =\sup\left\{\int_B w^2 f:\ \|f\|_{L^{\Tilde{\varphi}}}\leq 1\  \text{and}\  f\geq 0\right\}.
    \]
    Combining with (\ref{eq:1}) and (\ref{eq:Or_eqiuv}) gives
 \[
    \|w^2\|_{L^\varphi}  \leq C\|\nabla_A w\|^2_{L^2}.
    \]
    To obtain the desired $\phi-2$ Orlicz-Sobolev Inequality it remains to show that $\|w\|_{L^{\phi}}^2\leq C\|w^2\|_{L^{\varphi}}$. This follows immediately from the second inequality in Lemma \ref{lem:2-1}.
    Hence,
\[
\|w\|_{L^\phi}^2\leq 4\|w^2\|_{L^\varphi} \leq C\|\nabla_A w\|_{L^2}^2.
\]
By the density of $Lip_0(B)$ in $W^{1,2}_0(B)$ we obtain the desired Orlicz-Sobolev inequality,
\[
\|v\|_{L^{\phi}(B,d\mu)}\leq \|\nabla_A v\|_{L^2(B,d\mu)} \text{ for all } v\in \left(W_A^{1,2}\right)_0
\]
\end{proof}

\section{Sharpness}\label{Counterexamples}

In this section, we demonstrate a weak degree of sharpness of our results. More precisely, we show that even though the requirement on the right hand side function $f$ in the sufficiency result, Theorem \ref{thm:sufficiency}, is stronger then the one in Theorem \ref{thm:necessity}, it cannot be significantly relaxed. Namely, there exist an operator $A$ and a function $u\in \left(W^{1,2}_{A}\right)_{0}$ such that (1) a $\Psi-2$ Orlicz-Sobolev inequality holds in a subunit metric ball $B$ with $\Psi(t)= t^{2}(\ln t)^{N}$, $N>1$, for all $t>1$; (2) $Lu\in L^{\tilde{\Phi}_M}$ with $\Phi_M(t)= t(\ln t)^{M}$, $M>2+2N$, for all $t>1$; (3) $u$ is unbouned at the origin. To set the stage, we first provide similar constructions in the case of the Laplacian operator, and a finitely degenerate operator.

\subsection{Laplacian counterexample}
Recall that in general we are concerned with the following divergence form operator
$Lu=\nabla\cdot A\nabla u$. Now consider the two dimensional case of $\R^{2}$ and
let
\[
A=\begin{pmatrix}
1&0 \\
0&1
\end{pmatrix},
\]
so $L=\Delta$, the Laplace operator.
For a generic $u$ changing to polar coordinates gives  
\[
\Delta u =\frac{1}{r}\frac{\partial}{\partial r}\qty(r\frac{\partial u}{\partial r})+\frac{1}{r^2}\frac{\partial^2 u}{\partial \theta^2}.
\]
Now we choose a weak solution $u$ that is unbounded at the origin. 
The power $\alpha$ helps us control the integrability of this 
unbounded function. Define
\[
u=\qty(\ln\frac{1}{r})^{\alpha},
\]
where $0<\alpha< 1/2$, so one can check that $u\in W^{1,2}(B(0,1/2))$. Since this function does not depend on $\theta$, we have
\[
Lu=\Delta u = \frac{1}{r^2}\alpha(\alpha-1)\ln\qty(\frac{1}{r})^{\alpha-2}.
\]
Thus with 
\[
f := \frac{1}{r^2}\alpha(\alpha-1)\qty(\ln\frac{1}{r})^{\alpha-2}
\]
$u$ is a weak solution to $Lu=f$ which is unbounded at the origin.
We now calculate the
$L^q$ norm of $f$ in the ball $B=B(0,1/2)$
\[
||f||_{L^{q}(B)}=\int_{0}^{2\pi}\int_0^{\frac{1}{2}}\abs{f(r)}^qr\dd{r}\dd\theta=2\pi\alpha(\alpha-1)\int_{0}^{\frac{1}{2}}\abs{\frac{1}{r^2}\qty(\ln\frac{1}{r})^{\alpha-2}}^q\ r\dd{r}
\approx \int_{0}^{1/2}\qty(\ln\frac{1}{r})^{q(\alpha-2)}\frac{dr}{r^{2q-1}}.
\]
The integral on the right is finite provided $q<1$ or $q= 1$ and $\alpha <1$. In particular, $u=\ln(1/r)^{1/4}$ is an unbounded weak solution to $\Delta u=f$ with $f\in L^{q}(B)$, $q=1=n/2$.  

On the other hand, if $u$ is a weak solution to $Lu=f$ and $f\in L^{q}(B)$ with $q>n/2$, Theorem 8.16 in \cite{GilTrud} gives that 
\begin{equation*}
    \sup_{B}{|u|} \leq   \sup_{\partial B}|u|+ C ||f||_{q}<\infty.
\end{equation*}

Furthermore, for the Laplace operator, the associated subunit metric space coincides with the Euclidean $\R^n$, and we have the following Sobolev Inequality 

\begin{equation}\label{Sob-classic}
\left(\frac{1}{|B|}\int_{B}|w|^{2\sigma}\right)^{\frac{1}{2\sigma}}\leq Cr\left(\frac{1}{|B|}\int_{B}|\nabla_A w|^{2}\right)^{\frac{1}{2}}+C\left(\frac{1}{|B|}\int_{B}|w|^{2}\right)^{\frac{1}{2}}
\end{equation}
for $\sigma \leq \frac{n}{n-2}$ and so $\sigma'=n/2$ is the dual of $\sigma$.

\subsection{Degenerate counterexamples}
The following two examples are based on the examples constructed in \cite{Luda2} (see Section 3 Chapter 9). 
Let $L=\nabla\cdot A\nabla$ with 
\[A=\begin{pmatrix}
1 & 0 \\
0 & g(x)^2 \\
\end{pmatrix},\]
where $g(0)=0$, $g$ is positive away from the origin, and $g=\psi'$ where $\psi$ is smooth, even, strictly convex on $\R$ and $\psi(0)=0$. Moreover, we will assume that $g(x)/x\to 0$ and $\psi(x)/x^{2}\to 0$ as $x\to 0$. 
Since the operator $L$ is elliptic away from the $y$-axis, and translation invariant with respect to the $y$ variable, we may restrict our attention to the ball $B=B(0,\rho)$ centered at the origin, with radius $\rho$ sufficiently small.
Define the function $u$ by
\begin{equation}\label{eq:u_def}
u(x,y):=\chi\qty(\frac{y}{\psi(x)})\ln{\frac{1}{x}},
\end{equation}
where $\chi(s)$ is a smooth odd function on $\R$ such that $\chi(s)=1$ for $s\in[-1,1]$ and $\chi(s)=0$ for $s\in\R\smallsetminus[-2,2]$. First note that the function $u$ is supported in the narrow region along the $x$-axis, where $|y|\leq 2\psi(x)$.
Next we calculate
\begin{align*}
 u_y &=\chi'\qty(\frac{y}{\psi(x)})\frac{1}{\psi(x)}\ln{\frac{1}{x}},\\
 u_{yy}&=\chi''\qty(\frac{y}{\psi(x)})\frac{1}{\psi(x)^2}\ln{\frac{1}{x}},\\ 
 u_x &=\chi'\qty(\frac{y}{\psi(x)})\qty(\frac{-y\psi'(x)}{\psi(x)^2})\ln{\frac{1}{x}}-\frac{1}{x}\chi\qty(\frac{y}{\psi(x)}), \\
 u_{xx}&=\chi''\qty(\frac{y}{\psi(x)})\qty(\frac{y\psi'(x)}{\psi(x)^2})^2\ln{\frac{1}{x}}+\chi'\qty(\frac{y}{\psi(x)})\qty(\frac{2y\psi'(x)^2}{\psi(x)^3}-\frac{y\psi''(x)}{\psi(x)^2})\ln{\frac{1}{x}}\\
&\quad+\frac{2}{x}\chi'\qty(\frac{y}{\psi(x)})\qty(\frac{y\psi'(x)}{\psi(x)^2})+\frac{1}{x^2}\chi\qty(\frac{y}{\psi(x)}).
\end{align*}
To make further estimates, we will write $a\approx b$ for any two given functions $a$ and $b$ to imply that the two inequalities
\[
C_1 a \leq b \leq C_2 a
\]
hold for all elements of the domain of $a$ and $b$ and for some constants $C_1,C_2>0$. Define 
\[
f(x,y):=Lu=u_{xx}+g(x)^2u_{yy},
\]
using the properties of $g$, $\psi$, and $\chi$, we then have
\begin{equation}\label{eq:f_est}
 |f(x,y)|\approx \frac{1}{x^2}+\ln \frac{1}{x}\qty(\frac{|\psi''(x)|}{\psi(x)})+\ln \frac{1}{x}\qty(\frac{\psi'(x)}{\psi(x)})^2+\frac{1}{x}\qty(\frac{\psi'(x)}{\psi(x)}),   \end{equation}
and $f$ is supported in $|y|\leq 2\psi(x)$.

\subsubsection*{Finite vanishing}
Fix $m\geq 1$ and let
\[
\psi(x)=\frac{1}{m+1}x^{m+1}.
\]
Differentiating gives
\[
g(x)=\psi'(x)=x^{m},\  \  \psi''(x)= mx^{m-1},
\]
which combined with (\ref{eq:f_est}) implies
\[
|f(x,y)|\approx\frac{1}{x^{2}}\ln\frac{1}{x}.
\]
Recall that $f$ is supported where $|y|\leq 2\psi(x)$, so we can estimate the $L^q$ norm in the ball $B$
\[
\int_{B}|f(x,y)|^{q}dxdy\lesssim \int_{0}^{\rho}\frac{1}{x^{2q}}\left(\ln\frac{1}{x}\right)^{q}\psi(x)dx
\approx \int_{0}^{\rho}\frac{1}{x^{2q-m-1}}\left(\ln\frac{1}{x}\right)^{q}dx.
\]
The right hand side is finite if and only if $q<\frac{m+2}{2}$. We now verify that the function $u$ belongs to the Sobolev space $W^{1,2}_{A}(B)$, i.e. $u\in L^{2}(B)$ and
\[
\int_{B}|\nabla_{A}u|^{2}dxdy=\int_{B}\left(|u_x|^{2}+g(x)^{2}|u_y|^{2}\right)dxdy<\infty.
\]
Using the expressions for $u_x$ and $u_y$ and the estimates for $\psi'$ and $\psi''$ we have for $|y|\leq 2\psi(x)$
\[
|u_x|^{2}+g(x)^{2}|u_y|^{2}\approx \frac{1}{x^2}\left(\ln\frac{1}{x}\right)^{2}+\frac{g(x)^2}{\psi(x)^2}\left(\ln\frac{1}{x}\right)^{2}\approx\frac{1}{x^2}\left(\ln\frac{1}{x}\right)^{2},
\]
where for the last equality we used $g=\psi'$. Altogether we obtain
\[
\int_{B}|\nabla_{A}u|^{2}dxdy\approx \int_{0}^{\rho}\frac{1}{x^2}\left(\ln\frac{1}{x}\right)^{2}\psi(x)dx
\approx \int_{0}^{\rho}\frac{1}{x^{1-m}}\left(\ln\frac{1}{x}\right)^{2}dx,
\]
which is finite for all $m>0$. It is easy to see that $u\in L^{2}(B)$, so that $u\in W^{1,2}_{A}(B)$. Moreover, we have $u(x,\psi(x))=\ln(1/x)$, so $u$ is unbounded at the origin. Thus, for any $q<\frac{m+2}{2}$ we obtain that $u$ is an unbounded weak solution to $Lu=f$ with  $f\in L^{q}(B)$.

On the other hand Proposition 74 in \cite{Sawyer} implies
the following  Sobolev inequality
\begin{equation}\label{eq:sob_saw}
    \left\{ \frac{1}{|B|}\int_{B}{|w|^{2\sigma}} \right\}^{\frac{1}{2\sigma}} 
    \leq 
    Cr 
    \left\{ \frac{1}{|B|}\int_{B}{|\nabla_{A} w|^{2}} \right\}^{\frac{1}{2}}
    + 
    C 
    \left\{
    \frac{1}{|B|} \int_{B}{|w|^2}
    \right\}^{\frac{1}{2}}
\end{equation}
for all $w \in W^{1,2}_{0}(B)$, where $\sigma'=(m+2)/2$. Theorem 8 in \cite{Sawyer} then implies that
if $u$ is a weak solution to $Lu=f$ and $f\in L^{q}(B)$ with $q>\frac{m+2}{2}$, it is locally bounded.

\subsubsection*{Infinite vanishing} 
We now consider the case when the function $g$, and therefore $\psi$, vanishes to infinite order at the origin. Namely, fix $\alpha>0$ and define
\[
\psi(x):=x^{\alpha+1}e^{-\frac{1}{x^{\alpha}}},
\]
so that 
\[
g(x)=\psi'(x)=\alpha e^{-\frac{1}{x^{\alpha}}}+(\alpha+1)x^{\alpha}e^{-\frac{1}{x^{\alpha}}}\approx e^{-\frac{1}{x^{\sigma}}},
\]
and
\[
\qty(\frac{\psi'(x)}{\psi(x)})^2\approx\frac{\psi''(x)}{\psi(x)}\approx \frac{1}{x^{2\alpha+2}}.
\]
Combining with (\ref{eq:f_est}) gives
\[
|f(x,y)|\approx\frac{1}{x^{2\alpha+2}}\ln\frac{1}{x},
\]
and $f$ is supported in $|y|\leq 2\psi(x)$. Note that $f$ does not belong to $L^{\infty}(B)$ since it is unbounded at the origin, so we look for an appropriate Orlicz space for the function $f$. Recall the family of Young functions $\Phi_N$ given in Definition \ref{bumpfam}. The following Orlicz-Sobolev inequality has been shown in \cite{Luda}
\[ 
\| w \| _{L^{\Phi}(B)}  \leq C \| \nabla_{A}w \|_{L^1(B)} \quad\text{for}\  \    w \in \qty(W^{1,1}_{A})_{0}(B),
\] 
if $\Phi=\Phi_N$, $N\geq 1$, and $\alpha N<1$. Here, $B$ is a sufficiently small subunit metric ball centered at the origin. Letting $w=v^{2}$ and using Cauchy-Schwartz inequality and Lemma \ref{lem:2-1} we then obtain
\begin{equation}\label{eq:sob_inf}
\| v \| _{L^{\Psi}(B)}  \leq C \| \nabla_{A}v \|_{L^2(B)}+C\|v\|_{L^2(B)} \quad\text{for}\  \    v \in \qty(W^{1,2}_{A})_{0}(B),
\end{equation}
with $\Psi$ defined by $\Psi(t)=\Phi(t^{2})$, i.e. $\Psi(t)\approx t^2(\ln t)^N$ for all $t> 1$, provided $N\alpha<1$. To make analogy to the finite type case note that (\ref{eq:sob_saw}) is (\ref{eq:sob_inf}) with
\[
\Psi(t)=t^{2\sigma},\quad\text{or}\  \  \Phi(t)=t^\sigma.
\]
Thus, just as in the finite type case (or elliptic case), we expect all weak solutions to $Lu=f$ to be bounded provided $f$ belongs to a slightly smaller space than the dual of $L^{\Phi}$, which is $L^{\tilde{\Phi}}$. On the other hand, if $f$ is in a slightly bigger space than $L^{\tilde{\Phi}}$ we expect there to exist an unbounded weak solution to $Lu=f$. We have already shown in Theorem \ref{thm:boundedness} that every weak solution $u\in \left(W^{1,2}_{A}\right)_{0}(B)$ to $Lu=f$ with $f\in L^{\infty}(B)\subsetneq L^{\tilde{\Phi}}(B)$ is bounded.\\
Finally, let $u$ be defined by (\ref{eq:u_def}), and one can verify that $u\in W^{1,2}_{A}(B)$. Recall that with $f=Lu$ we have (\ref{eq:f_est}), i.e.
\[
|f(x,y)|\approx\frac{1}{x^{2\alpha+2}}\ln\frac{1}{x} 
\]
supported in $|y|\leq 2\psi(x)$. We now would like to find a Young function $\theta$ so that $f\in L^{\theta}(B)$ and we expect $L^{\theta}(B)$ to be larger than $L^{\tilde{\Phi}}(B)$ (analogous to $q<\sigma'$). Let $\theta=\tilde{\Phi}_M$, $M\geq 1$, using the estimates from \cite{Luda} we have
\[
\theta(s)\leq Ms^{1-\frac{1}{M}}e^{s^{\frac{1}{M}}} 
\]
for $s\geq(2M)^{M}$.
Therefore,
\begin{align*}
\int_{B}\theta(f(x,y))dxdy&\approx\int_{0}^{\rho}\theta\left(\frac{1}{x^{2\alpha+2}}\ln\frac{1}{x} \right)\psi(x)dx\\
&\approx \int_{0}^{\rho}\left(\ln\frac{1}{x}\right)^{1-\frac{1}{M}}\frac{1}{x^{(2\alpha+2)(1-1/M)}}\exp{\left(\ln\frac{1}{x}\right)^{\frac{1}{M}}\cdot \frac{1}{x^{(2\alpha+2)/M}}-\frac{1}{x^{\alpha}}}dx.
\end{align*}
In order for this integral to be finite we must require
\[
  \frac{2\alpha+2}{M}<\alpha, 
\]
which implies
\[
M > 2 + \frac{2}{\alpha} > 2 + 2N,
\]
since $\alpha N<1$. Thus $f\in L^{\tilde{\Phi}_{M}}(B)$ for $M>2+2N$, and since $M>N$ (i.e. $L^{\Phi_M}\subsetneq L^{\Phi_N}$) we have
\[
L^{\tilde{\Phi}_N}\subsetneq L^{\tilde{\Phi}_M}
\]
as expected. Therefore, there exists an unbounded weak solution to $Lu=f$ with $f\in L^{\tilde{\Phi}_{M}}(B)$.


\section*{Acknowledgements}

The first three authors would like to first and foremost thank Luda Korobenko. Without her incredible kindness, patience, and generosity this research would never have taken place, and our summers would have been much more dull.

\end{document}